\newcommand{\R}{\mathbb{R}}
\newcommand{\C}{\mathbb{C}}
\renewcommand{\epsilon}{\varepsilon}
\renewcommand{\theta}{\vartheta}
\renewcommand{\phi}{\varphi}
\renewcommand{\Re}{\mathrm{Re}}
\renewcommand{\Im}{\mathrm{Im}}
\newcommand\norma[1]{\left\lVert#1\right\rVert}
\theoremstyle{plain}
\newtheorem{teor}{Theorem}
\newtheorem{prop}[teor]{Proposition}
\newtheorem{lem}[teor]{Lemma}
\theoremstyle{definition}
\theoremstyle{definition}
\title{Fully non-linear elliptic equations on compact hyperk\"ahler manifolds}
\begin{document}
	
\thanks{This work was supported by GNSAGA of INdAM}

\address{(Giovanni Gentili) Dipartimento di Matematica G. Peano \\ Universit\`a degli Studi di Torino\\
		Via Carlo Alberto 10\\
		10123 Torino\\ Italy.}
\email{giovanni.gentili@unito.it}
	
\address{(Luigi Vezzoni) Dipartimento di Matematica G. Peano \\ Universit\`a degli Studi di Torino\\
Via Carlo Alberto 10\\
10123 Torino\\ Italy.}
\email{luigi.vezzoni@unito.it}

\subjclass[2020]{35B45, 53C26, 35J60, 32W50}
	
	
\author{Giovanni Gentili and Luigi Vezzoni}
	
\date{\today}

\maketitle
\begin{abstract}
We consider a general class of elliptic equations on hypercomplex manifolds which includes the quaternionic Monge-Amp\`ere equation, the quaternionic Hessian equation and the Monge-Amp\`ere equation for quaternionic $(n-1)$-plurisubharmonic functions. We prove that under suitable assumptions the solutions to these equations on hyperk\"ahler manifolds satisfy a $C^{2,\alpha}$ a priori estimate. 
\end{abstract}
	
\section{Introduction}
In the present paper we study a general class of elliptic equations on compact hyperhermitian manifolds. 
The interest in this class of equations moves from a Calabi-Yau--type conjecture on HKT manifolds stated by Alesker and Verbitsky in \cite{Alesker-Verbitsky (2010)} and from the work of Harvey and Lawson \cite{HL1,HL2} about a general class of   Dirichlet problems on special Riemannian manifolds including hyperhermitian manifolds. The conjecture of Alesker and Verbitsky  
states that is always possible to prescribe the $J$-anti-invariant part of the Chern-Ricci form on compact HKT manifolds and has strong applications on the geometry of hyperhermitian manifolds. In analogy to the complex case, the conjecture can be analytically reformulated in terms of a Monge-Amp\`ere--type equation (called {\em quaternionic Monge-Amp\`ere equation}). So far the solvability of such equation is proved only under extra assumptions. In \cite{Alesker (2013)} Alesker proved that the equation is always solvable on compact flat hyperk\"ahler manifolds. The result was drastically improved by Dinew and Sroka in \cite{Dinew-Sroka} who confirmed the conjecture  on every compact hyperk\"ahler manifold (for other results related to the study of the equation see \cite{Alesker-Shelukhin (2017),GV,GV2,Sroka,Srokasharp} and the references therein). Beside the quaternionic Monge-Amp\`ere equation, other parabolic and elliptic equations on hyperhermitian manifolds have been considered in the literature. In \cite{BGV,BGV2,Z} the parabolic counterpart of the quaternionic Monge-Amp\`ere equation was studied, while more general parabolic and elliptic equations are taken into account in   \cite{GZ1,GZ2,HL1,HL2,Srokasharp}. More recently, in \cite{FXZ} a quaternionic analogue of the Monge-Amp\`ere equation for $(n-1)$-plurisubharmonic functions \cite{Form-type,TW17,TW19} is solved on compact hyperkahler manifolds. 

\medskip

A {\em hypercomplex manifold} is a smooth manifold $M$ equipped with three complex structures $(I,J,K)$ satisfying the quaternionic relations
$$
IJ=-JI=K\,.
$$ 
A Riemannian metric $g$ on $M$ is hyperhermitian if it is compatible with each complex structure. A hyperhermitian metric $g$ induces the fundamental form $ \omega(\cdot,\cdot):=g(I\cdot ,\cdot)$. Due to the quaternionic relations $\omega$ is $J$-anti-invariant, i.e. 
$\omega(J\cdot,J\cdot)=-\omega(\cdot,\cdot)$. On the other hand a skew-symmetric $2$-form 
which is compatible with $I$ and is $J$-anti-invariant induces a hyperhermitian metric canonically. A  hypercomplex manifold with a hyperhermitian metric is called a {\em hyperhermitian manifold}.  
 The standard model of hyperhermitian manifold is the quaternionic vector space $\mathbb H^n$ with the Euclidean metric. Moreover, any hyperk\"ahler manifold is in particular hyperhermitian. 
 
\medskip 
Next we describe the class of equations we consider in the present paper. \\
Let $(M^{4n},I,J,K,g)$ be a hyperhermitian manifold of real dimension $ 4n $. Let also $\chi$ be a $ J $-anti-invariant real form in $\Lambda_I^{1,1}M$. For any function $ \phi \colon M \to \R $ which is at least of class $ C^2 $, the form
\[
\chi_\phi:=\chi+\frac{\sqrt{-1}\partial \bar \partial \phi -\sqrt{-1}J\partial \bar \partial \phi}{2}
\]
is a $ J $-anti-invariant real form in $\Lambda_I^{1,1}M$. We denote with $g_\phi$ the corresponding symmetric $2$-tensor. Composing with $ g^{-1} $ we get an endomorphism 
\[
A_\phi:=g^{-1}g_\phi \colon T^{1,0}_IM \to T^{1,0}_IM
\]
which is hyperhermitian with respect to $ g $. We shall study equations of the form
\begin{equation}\label{eq_main}
F(A_\phi)=h\,,
\end{equation}
where $ h \colon M \to \R $ is a smooth datum.  By $F(A_\phi)$ we denote an expression of the type $f(\lambda(A_\phi))$, where $\lambda(A_\varphi)=(\lambda_1,\dots,\lambda_{n})$ denotes the eigenvalues of $A_\varphi$ regarded as an $n\times n$ quaternionic matrix and  $f$ is a symmetric function satisfying the following conditions:
\begin{enumerate}
\item[1.] $ f $ is defined on a symmetric proper convex open cone $ \Gamma $ in $ \R^{n} $ with vertex at the origin and containing the positive orthant $ \Gamma_{n}:=\{ \lambda \in \R^{n} \mid \lambda_i>0\,, \forall i=1,\dots,n \} $.

\vspace{0.1cm}
\item[2.] $ f_i:=\frac{\partial f}{\partial \lambda_i}>0 $ for all $ i=1,\dots,n $ and $ f $ is a concave function.

\vspace{0.1cm}
\item[3.] $ \sup_{\partial \Gamma}f<\inf_Mh $, where $ \sup_{\partial \Gamma}f=\sup_{\lambda_0\in \partial \Gamma} \limsup_{\lambda \to \lambda_0} f(\lambda) $.

\vspace{0.1cm}
\item[4.] For any $ \sigma <\sup_\Gamma f $ and $ \lambda \in \Gamma $ we have $ \lim_{t\to \infty} f(t\lambda)>\sigma $.
\end{enumerate}
Assumption 2. ensures that if $\phi$ is {\em $\Gamma$-admissible}, i.e.
\[
\lambda\left( A_\phi\right)\in \Gamma\,,
\]
the equation is elliptic, while 3. guarantees non-degeneracy of the equation and then uniform ellipticity once the second order estimate is obtained.
	
\medskip 	
The same framework has been investigated at length and has a long lasting tradition starting from the influential paper of Caffarelli, Nirenberg and Spruck \cite{CNS}, where the Dirichlet problem on domains of $ \R^n $ is considered. The setting we described above is the natural generalization to the quaternionic case of the one considered by Sz\'{e}kelyhidi in \cite{Szekelyhidi} for studying fully non-linear elliptic equations on complex manifolds. Note that the quaternionic Monge-Amp\`ere equation, the quaternionic Hessian equation \cite{GZ1,HL1,HL2} and the Monge-Amp\`ere equation for quaternionic $(n-1)$-plurisubharmonic functions \cite{GZ1} belong to this general class of equations.

\medskip
Our main result is the following:
	
\begin{teor}\label{main}
Let $ (M^{4n},I,J,K,g) $ be a compact hyperk\"ahler manifold, $ \chi\in \Lambda^{1,1}_IM $ a real $ J $-anti-invariant form, and $ \underline{\phi} $ a $ \mathcal{C} $-subsolution of \eqref{eq_main}. Then there exist $ \alpha \in (0,1) $ and a constant $ C>0 $, depending only on $ (M,I,J,K,g),$  $\chi,$  $h $ and $ \underline{\phi} $, such that any $\Gamma$-admissible solution $ \phi $ to \eqref{eq_main} with $ \sup_M\phi=0 $ satisfies the estimate
\[
\|\phi\|_{C^{2,\alpha}}\leq C\,.
\]
\end{teor}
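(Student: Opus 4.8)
The plan is to run the classical scheme for a priori estimates of fully non-linear elliptic equations in the order $C^{0}$ estimate, second order estimate, gradient estimate, and then to conclude by the Evans--Krylov theorem; throughout we adapt to the quaternionic setting the arguments of Sz\'ekelyhidi \cite{Szekelyhidi} for the complex case, the hyperk\"ahler hypothesis playing the role that K\"ahlerianity plays there. The point of that hypothesis is that on a hyperk\"ahler manifold the Levi--Civita connection preserves $I$, $J$ and $K$, hence coincides with the Obata connection; consequently covariant differentiation commutes, up to bounded curvature terms and with no torsion contributions, with the operators $\partial$, $\bar\partial$ and with the $J$-twist entering the definition of $\chi_{\phi}$, which is exactly what makes the quaternionic computations mirror the complex ones.

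First I would establish the zeroth order bound $\|\phi\|_{C^{0}}\le C$. Since $\sup_{M}\phi=0$ it suffices to bound $\inf_{M}\phi$ from below, and this follows from the $\mathcal{C}$-subsolution assumption by an Alexandrov--Bakelman--Pucci type argument in the spirit of B{\l}ocki and Sz\'ekelyhidi: one works on the set where $\phi-\underline{\phi}$ is close to its minimum, shows there that the quaternionic Hessian of $\phi$ is bounded from below, and then integrates the associated Monge--Amp\`ere type measure, using the cone condition defining $\underline{\phi}$ together with assumption 3 (which keeps the relevant eigenvalues a definite distance from $\partial\Gamma$) to close the estimate.

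The heart of the matter is the second order estimate, which I would state in the form $\sup_{M}\bigl|\partial\bar\partial\phi-J\partial\bar\partial\phi\bigr|_{g}\le C\bigl(1+\sup_{M}|\nabla\phi|^{2}\bigr)$. Let $\lambda_{1}(A_{\phi})$ denote the largest eigenvalue of $A_{\phi}$ and apply the maximum principle to a test function of the form $Q=\log\lambda_{1}(A_{\phi})+\psi(|\nabla\phi|^{2})+\eta(\phi-\underline{\phi})$, for suitably chosen one-variable functions $\psi$ and $\eta$ and treating the possible non-smoothness of $\lambda_{1}$ by the usual perturbation device. At an interior maximum of $Q$ one differentiates the equation once and twice and inserts the resulting identities into the linearised operator $L=\sum_{i}f_{i}\,\nabla_{i}\nabla_{\bar i}$ written in a frame diagonalising $A_{\phi}$. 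Two structural inputs make the estimate close: the concavity of $f$ (assumption 2), which renders the second-derivative term in the twice-differentiated equation negative semidefinite and thereby absorbs the dangerous third order terms exactly as in the complex case; and the $\mathcal{C}$-subsolution property, which produces the Guan--Sz\'ekelyhidi dichotomy, namely $\sum_{i}f_{i}\bigl(\lambda_{i}(A_{\underline{\phi}})-\lambda_{i}(A_{\phi})\bigr)\ge\epsilon\bigl(1+\sum_{i}f_{i}\bigr)$ whenever some eigenvalue of $A_{\phi}$ is large, a term which dominates the remaining error. The genuinely quaternionic contributions --- those from the $J\partial\bar\partial\phi$ piece and from commuting covariant derivatives through $I$, $J$, $K$ --- enter only through fixed curvature of $(M,g)$ and, by parallelism, through $|\nabla\phi|^{2}$, which is why the gradient appears on the right-hand side. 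I expect this step to be the main obstacle: organising the extra terms and checking that the absorption and the dichotomy still close the estimate in their presence is the delicate part.

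Finally I would obtain the gradient bound $\sup_{M}|\nabla\phi|\le C$ by a Dinew--Ko{\l}odziej type blow-up argument as in \cite{Szekelyhidi}: were it to fail, rescaling a sequence of solutions around points where $|\nabla\phi_{k}|\to\infty$ and invoking the second order estimate, the $C^{0}$ bound and Evans--Krylov would produce in the limit a bounded, non-constant, entire $\Gamma$-admissible solution on $\H^{n}$ with flat hyperk\"ahler structure, of a constant-coefficient equation of the class considered, whose gradient is bounded and attains its supremum, contradicting a Liouville type theorem (valid in this generality thanks to the uniform ellipticity at hand). Once $\|\phi\|_{C^{0}}+\|\phi\|_{C^{1}}+\|\phi\|_{C^{2}}\le C$ is known, assumption 3 forces $\lambda(A_{\phi})$ into a fixed compact subset of $\Gamma$, so \eqref{eq_main} is uniformly elliptic; since $f$ is concave, the Evans--Krylov theorem applied in local charts together with a covering argument yields $\|\phi\|_{C^{2,\alpha}}\le C$ for some $\alpha\in(0,1)$, which completes the proof.
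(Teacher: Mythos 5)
Your outline for the $C^{0}$ step matches the paper (ABP maximum principle plus the weak Harnack inequality), and your identification of the hyperk\"ahler hypothesis as the source of parallelism for $I,J,K$ is exactly the structural input the paper exploits. But the way you propose to close the second-order and gradient estimates diverges from the paper in a way that opens a genuine gap.

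You propose the second-order estimate in Sz\'ekelyhidi's form $\sup_{M}|\partial\bar\partial\phi-J\partial\bar\partial\phi|_{g}\le C\bigl(1+\sup_{M}|\nabla\phi|^{2}\bigr)$, using a test function with $\log\lambda_{1}$, and then recover the gradient bound from a Dinew--Ko\l odziej type blow-up and a Liouville theorem on flat $\H^{n}$. The difficulty is precisely that last ingredient: a Liouville theorem for bounded entire $\Gamma$-admissible solutions to a \emph{general} operator $f(\lambda(\cdot))$ of the class considered here is not established in the quaternionic setting, and the paper does not invoke one. The Dinew--Ko\l odziej result is for complex Hessian equations, and even in the complex case Sz\'ekelyhidi's use of it is restricted to specific operators; nothing in the hypotheses 1--4 supplies such a theorem for arbitrary $(f,\Gamma)$ over $\H^{n}$. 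As written, your gradient step rests on a result you would still have to prove.

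The paper's actual route sidesteps this. Instead of the quadratic estimate, it proves the sharper bound $\Delta_{g}\phi\le C\bigl(\|\nabla\phi\|_{C^{0}}+1\bigr)$, \emph{linear} in the gradient, by using the Chou--Wang/Hou--Ma--Wu test quantity $Q=2\sqrt{\tilde\lambda_{1}}+\alpha(|\nabla\phi|_{g}^{2})+\beta(\phi)$ (not $\log\lambda_{1}$), together with a hyperk\"ahler-specific ``swap'' inequality $\sum_{p}F^{k\bar k}g^{\phi}_{p\bar p,k\bar k}\ge\sum_{p}F^{k\bar k}g^{\phi}_{k\bar k,p\bar p}-C\mathcal F$ that relies on the vanishing at the base point of the second-order contributions of the Christoffel symbols (a computation using $\partial_{\bar p}J^{a}_{\bar q}=\partial_{\bar q}J^{a}_{\bar p}$ in $I$-holomorphic coordinates, available because $J$ is parallel). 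With linearity in the gradient, the $C^{1}$ bound follows from an interpolation inequality, Morrey embedding, and $L^{p}$-elliptic estimates for $\Delta_{g}$ alone --- no blow-up and no Liouville theorem. The paper even remarks on this explicitly as the point of the hyperk\"ahler assumption. Finally, the $C^{2,\alpha}$ step uses the Tosatti--Wang--Weinkove--Yang theorem rather than Evans--Krylov directly, because the second-order dependence enters through the position-dependent twist $T(D^{2}\phi,x)$ involving $J(x)$, which is exactly the situation TWWY is designed to handle; this last point is minor, but the Liouville issue is not, and you would need to either supply that theorem or rework the estimate to be linear in the gradient as the paper does.
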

	
In the statement by {\em $ \mathcal{C} $-subsolution of \eqref{eq_main}} we mean that $ \underline{\phi}\in C^2(M,\R) $ is such that 
$$
\left( \lambda\bigl(A_{\underline{\phi}}\bigr) +\Gamma_n \right)\cap \partial \Gamma^{h(x)}	\mbox{ is bounded for every }x\in M\,,
$$
where for every  $ \sigma>\sup_{\partial \Gamma} f $, $ \Gamma^\sigma$ denotes the convex superlevel set $ \Gamma^\sigma=\{ \lambda \in \Gamma \mid f(\lambda)>\sigma \} $. 

\medskip 
The proof of Theorem \ref{main} is obtained as follows:

\vspace{0.1cm}
In section \ref{C0} we prove that solutions to \eqref{eq_main} satisfy a $C^0$-a priori bound. This section is quite general since the hyperk\"ahler assumption does not play a role and the estimate we obtain holds true when the manifold is simply hyperhermitian. 

\vspace{0.1cm}
In section \ref{Laplacian} we prove that the Laplacian of solutions to \eqref{main} satisfies the following estimate
$$
\Delta_g \phi \leq C\left( \|\nabla \phi \|_{C^0}+1 \right)
$$
for a positive constant $ C $ depending on the data. We use an approach introduced by Chou and Wang \cite{Chou-Wang} to study the Hessian equation (see also Hou, Ma and Wu \cite{Hou-Ma-Wu}). Here is where the hyperk\"ahler assumption plays a role. A key observation is that concavity of the equation implies that $F$ satisfies 
$$
F^{r\bar s}(A)A_{r\bar s}\leq C\,\sum_{k=1}^{2n}F^{k\bar k}(A)
$$
for every hyperhermitian matrix $A$ such that $\lambda(A)\in \partial \Gamma^\sigma $, where the constant $ C $ depends on $ \sigma \in [\sup_{\partial \Gamma} f, \sup_\Gamma f] $. Here $F^{r\bar s}$ are the first derivatives of 
$F$ with respect to the $(r,\bar s)$-th entry. Note that if $F_{*}$ denotes the differential of $F$ we have 
$$
F_{*|A}(X):=F^{i\bar j}(A)X_{i\bar j}
$$
and $\sum_{k=1}^{2n}F^{k\bar k}(A)$ is the trace of gradient of $F$ once it is regarded as a matrix. Note that our Laplacian estimate is shaper than the one obtained by Sz\'{e}kelyhidi \cite[Proposition 13]{Szekelyhidi}, since Sz\'{e}kelyhidi's estimate involves $\|\nabla \phi \|_{C^0}^2$, while the hyperk\"ahler assumption allows us to prove an estimate which involves $\|\nabla \phi \|_{C^0}$ only. 

\vspace{0.1cm}
In section \ref{gradient} we prove the $C^1$-estimate. The fact that our Laplacian estimate involves $\|\nabla \phi \|_{C^0}$, only, allows us to obtain the $C^1$-estimate quite easily. Indeed we can combine an interpolation inequality, Morrey's inequality and elliptic bounds in order to prove the estimate. In particular we do not need to apply any Liouville-type theorem.  

\vspace{0.1cm} In section \ref{C2alpha} we obtain the $C^{2,\alpha}$-estimate by using a general result of 
Tosatti, Wang, Weinkove and Yang \cite{TWWY} and in section \ref{finale} we combine the results of the previous sections in order to prove Theorem \ref{main}.  

\bigskip
	
\noindent {\bf Acknowledgements.}
The first-named author is grateful to Elia Fusi for many useful discussions.

%
%
%
	
\section{$ C^0 $ estimate}\label{C0}
In this section we show that every solution $\varphi$ of class $C^2$ to \eqref{eq_main} satisfies a $C^0$ a priori bound. The strategy we adopt is to use the Alexandrov-Bakelman-Pucci (ABP for short) maximum principle in the form of \cite[Proposition 10]{Szekelyhidi}. Such an idea can be traced back to the work of B\l ocki \cite{Blocki} for the complex Amp\`ere equation. For a more restrictive class of equations Sroka proves in  \cite{Srokasharp} a sharp 
$ C^0 $-estimate by adapting a technique of Guo and Phong \cite{GP22a,GP22b} and of Guo, Phong and Tong \cite{GPT21} to the quaternionic case. The class of  equations considered by Sroka is more restrictive than the one taken into account in the present paper, on the other hand Sroka's estimate is sharper than ours.

\begin{lem}\label{Lem:Harnack}
Let $ (M^{4n},I,J,K,g) $ be a compact hyperhermitian manifold. If $ \phi $ is a solution to \eqref{eq_main}, then there exist $ p,C>0 $, depending only on the background data, such that
\[
\left\|\phi-\sup_M \phi\right\|_{L^p}\leq C\,.
\]
\end{lem}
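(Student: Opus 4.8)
The plan is to follow the strategy of Błocki and Székelyhidi, deriving the $L^p$ bound from the Alexandrov--Bakelman--Pucci maximum principle as formulated in \cite[Proposition 10]{Szekelyhidi}. First I would normalize, replacing $\phi$ by $\phi-\sup_M\phi$ so that $\sup_M\phi=0$ and it suffices to bound $\|\phi\|_{L^p}$. Pick a point $x_0$ where the infimum of $\phi$ is attained and work in a fixed coordinate ball $B$ around $x_0$ of radius depending only on the geometry of $(M,g)$; after shrinking, one may assume the coordinates are chosen so that $g$ is close to Euclidean on $B$. The ABP principle says that the set where the function $u:=\phi-L(x)$ (with $L$ an affine function making a suitable contact) has a small downward paraboloid touching from below has measure bounded below in terms of $\inf_B u - \inf_{\partial B} u$ and $\int$ of $(\det D^2 u)_+$ over the contact set. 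Concretely, in the form used by Székelyhidi: if $\phi+\epsilon|x-x_0|^2$ achieves its minimum on $\bar B$ at an interior point, then for the set $P$ where the lower barrier condition holds,
\[
c\,\epsilon^{2n}\le \int_P \det(D^2\phi)\,,
\]
so the task reduces to bounding $\int_P\det(D^2\phi)$ from above.

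The key point is that on the contact set $P$ the real Hessian $D^2\phi$ is nonnegative, hence so are its $I$-Hermitian part and the quaternionic Hermitian matrix $A_\phi$ up to the bounded contribution of $\chi$ and the lower-order terms relating $\sqrt{-1}\partial\bar\partial\phi$ (and its $J$-twist) to $D^2\phi$. Thus on $P$ the eigenvalues $\lambda(A_\phi)$ lie in the positive orthant $\Gamma_n\subset\Gamma$, so $\phi$ is $\Gamma$-admissible there and the equation $f(\lambda(A_\phi))=h$ is in force with $f>\sup_{\partial\Gamma}f$. Now I would use assumption 3 together with the structure of $f$: since $f$ is concave, monotone, and $f(\lambda(A_\phi))=h\le\sup_M h$, while $\sup_{\partial\Gamma}f<\inf_M h$, the eigenvalues cannot be too large in a way that makes the product blow up — more precisely, one obtains a bound of the form $\prod_i\lambda_i\le C$ on $P$ from a standard argument (concavity of $f$ gives $f(\lambda)\le f(\mathbf 1)+\sum f_i(\mathbf 1)(\lambda_i-1)$ type inequalities, or one uses that the relevant superlevel set $\{f\ge \inf_M h\}\cap\Gamma_n$ stays at bounded distance from $\partial\Gamma$, whence a lower bound on the elementary symmetric function forces a pointwise bound on $\prod\lambda_i$ using $f\le\sup_M h$). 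Since $\det(D^2\phi)$ on $P$ is controlled, up to geometric constants, by $\prod_i\lambda_i(A_\phi)$ plus bounded terms coming from $\chi$, we get $\det(D^2\phi)\le C$ on $P$, hence $\int_P\det(D^2\phi)\le C\,\mathrm{vol}(B)$.

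Combining the two displays gives $\epsilon^{2n}\le C$, i.e. a bound on the admissible $\epsilon$, which by the ABP estimate translates into $-\inf_M\phi=\phi(x_0)\le C$ only after an iteration: one applies the above not once but to a sequence of sublevel sets (the De Giorgi / Błocki iteration) to upgrade the weak-type information into $\|\phi\|_{L^p}\le C$ for some small $p>0$. Concretely, letting $\Omega_s=\{\phi<\sup_M\phi-s\}$, the ABP argument bounds $\mathrm{vol}(\Omega_s)$ in terms of $\int_{\Omega_s}\det(D^2\phi)$, which in turn is $\le C\,\mathrm{vol}(\Omega_s)$ by the pointwise Hessian bound, but the geometric gain comes from comparing at two levels; iterating yields exponential decay of $\mathrm{vol}(\Omega_s)$, hence $\phi\in L^p$ uniformly. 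The main obstacle I anticipate is precisely the passage from ellipticity/admissibility \emph{on the contact set} to the pointwise upper bound on $\prod_i\lambda_i(A_\phi)$ there: this is where assumption 3 ($\sup_{\partial\Gamma}f<\inf_M h$) must be used essentially, to keep the eigenvalue vector a definite distance away from $\partial\Gamma$ so that $f$ being bounded above forces $\det A_\phi$ bounded; all the rest is the now-standard ABP-plus-iteration machinery, with only routine care needed for the non-commutative (quaternionic) linear algebra identifying $\det A_\phi$ with a positive multiple of $\det(D^2\phi)$ modulo bounded error.
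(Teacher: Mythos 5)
Your proposal is not the paper's argument, and as written it contains a genuine gap that makes it unlikely to close.

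The paper's proof of Lemma~\ref{Lem:Harnack} is much more elementary: since $\Gamma\subseteq\{\sum_i\lambda_i>0\}$ one has $\mathrm{tr}_\omega\chi_\phi>0$, and using $J$-anti-invariance of $\omega$ this gives the one-line differential inequality $\Delta_\omega\phi=\mathrm{tr}_\omega\chi_\phi-\mathrm{tr}_\omega\chi\geq -C$. Then the weak Harnack inequality \cite[Theorem 9.22]{GT} applied to $\psi=\phi-\sup_M\phi$ on a fixed cover of coordinate balls yields $\|\psi\|_{L^p(B_i)}\leq C(\inf_{B_i}(-\psi)+1)$; since $\psi\leq 0$ with $\sup\psi=0$, some ball has $\inf(-\psi)=0$, and a chaining argument across the cover finishes. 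No ABP, no subsolution, no nonlinear structure beyond the containment $\Gamma\subseteq\{\sum\lambda_i>0\}$ is used.

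What you propose is essentially the argument of the \emph{next} result in the paper, Proposition~\ref{Prop:C0}, and that argument \emph{uses} Lemma~\ref{Lem:Harnack} as an input, so your route is structurally circular. Concretely: ABP on the contact set $P$ of $v=\phi+\epsilon|x|^2$, after bounding $D^2 v$ there, gives $C_0\epsilon^{4n}\leq C\,\mathrm{Vol}(P)$. Since $v<S+\epsilon/2<0$ on $P$, this yields the \emph{lower} bound $\|v\|_{L^p}^p\geq |S+\epsilon/2|^p\,\mathrm{Vol}(P)\geq C^{-1}\epsilon^{4n}|S+\epsilon/2|^p$, which is the wrong inequality for the $L^p$ \emph{upper} bound you are trying to prove. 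One must combine this with an independently obtained upper $L^p$ bound — i.e.~with Lemma~\ref{Lem:Harnack} itself — to conclude $|S|\leq C$. Your proposed sublevel-set iteration does not supply the missing half: the step "$\mathrm{vol}(\Omega_s)$ is bounded by $\int_{\Omega_s}\det(D^2\phi)\leq C\,\mathrm{vol}(\Omega_s)$" is a tautology, and no precise mechanism for "comparing at two levels" is given. Finally, the pointwise Hessian bound on $P$ that you need is not a consequence of assumption 3 plus concavity alone (for general $f$ a bound on $f(\lambda)$ with $\lambda$ merely kept away from $\partial\Gamma$ does not bound $\prod_i\lambda_i$); what actually makes that step work in Proposition~\ref{Prop:C0} is the $\mathcal{C}$-subsolution condition, which is not among the hypotheses of Lemma~\ref{Lem:Harnack} and therefore cannot legitimately be invoked here.
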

\begin{proof}
The proof is a standard application of the weak Harnack inequality. We explain the main ideas for convenience of the reader. Take an open cover of $ M $ made of coordinate balls $ 2B_i:=B_{2r_i}(x_i) $ such that the balls $ B_i $ with half the radius still cover $ M $. Since 
$$
\Gamma \subseteq \left\{ \lambda \in \R^{n} \mid \sum_{i=1}^{n}\lambda_i>0 \right\}\,,
$$
we have $ \mathrm{tr}_\omega \chi_\phi >0 $ and so, using that $ \omega $ is $J$-anti-invariant, we get
\[
\Delta_\omega \phi=\mathrm{tr}_\omega(\sqrt{-1}\partial \bar \partial \phi)=\mathrm{tr}_\omega\left(\frac{\sqrt{-1}\partial \bar \partial \phi-\sqrt{-1}J\partial \bar \partial \phi}{2}\right)=\mathrm{tr}_\omega \chi_\phi- \mathrm{tr}_\omega \chi\geq - C\,.
\]
Therefore we can apply the weak Harnack inequality \cite[Theorem 9.22]{GT} to $\psi:=\phi-\sup_{M}\phi$ on $ 2B_i $ deducing
\begin{equation}\label{weakHarnack}
\|\psi\|_{L^p(B_i)}\leq C\left( \inf_{B_i}(-\psi)+1 \right)\,,
\end{equation}	
where $ p,C>0 $ depend only on the choice of the cover and the background metric. Since $ \psi\leq 0 $ we have $ \inf_{B_j}(-\psi)=0 $ for at least one index $ j $, and thus $ \|\psi\|_{L^p(B_j)}\leq C $. This bound also gives an estimate for $ \inf_{B_i}(-\psi) $ on all coordinate balls intersecting $B_j$. We can then iterate the argument 
by using \eqref{weakHarnack} and obtain an upper bound on each ball of the cover.
\end{proof}

\begin{prop}\label{Prop:C0}
Let $ (M^{4n},I,J,K,g) $ be a compact hyperhermitian manifold. If $ \underline \phi,\phi $ are a $ \mathcal{C} $-subsolution and a solution to \eqref{eq_main} respectively, with $ \sup_M \phi=0 $, then there is a constant $ C>0 $, depending only on the background data and the subsolution $ \underline{\phi} $, such that
\[
\|\phi\|_{C^0}\leq C\,.
\]
\end{prop}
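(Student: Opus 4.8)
The plan is to follow the approach of B\l ocki and Sz\'ekelyhidi. Since the normalisation $\sup_M\phi=0$ already gives $\phi\le 0$, only a lower bound $\inf_M\phi\ge -C$ has to be produced; as $\underline{\phi}$ is fixed this is equivalent to bounding $L:=-\min_M(\phi-\underline{\phi})$, and the bound will come from the ABP maximum principle \cite[Proposition 10]{Szekelyhidi} applied to a well-chosen auxiliary function on a fixed coordinate ball, together with Lemma \ref{Lem:Harnack} and the quantitative form of the $\mathcal{C}$-subsolution condition. For the latter I would first record the quaternionic analogue of \cite[Proposition 6]{Szekelyhidi}: since that argument uses only the eigenvalue vector, the convexity of $\Gamma$ and of the superlevel sets $\Gamma^{h(x)}$, and assumptions 1.--3., it transfers verbatim and yields constants $\delta,R>0$, depending only on the background data and on $\underline{\phi}$, such that for every $x\in M$ and every $g$-hyperhermitian endomorphism $B$ with $\lambda(B)\in\Gamma$, $F(B)=h(x)$ and $\lambda_k(B)\ge \lambda_k(A_{\underline{\phi}}(x))-\delta$ for all $k$, one has $|\lambda(B)|\le R$.

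Next, let $x_1\in M$ realise $\min_M(\phi-\underline{\phi})$ and fix real coordinates on a ball $B_1=B_1(x_1)$ of a fixed radius, centred at $x_1$, in which $g$, $\chi$, $\underline{\phi}$ and $(I,J,K)$ are uniformly comparable to their Euclidean models; fix once and for all $\epsilon_0>0$ so small that $C_1\epsilon_0\le\delta$, where $C_1$ is the comparison constant appearing below, and set $v:=\phi-\underline{\phi}+\epsilon_0|x|^2$ on $B_1$. Then $v(0)=-L$ while $v\ge -L+\epsilon_0$ on $\partial B_1$, so \cite[Proposition 10]{Szekelyhidi} gives
\[
c_0\,\epsilon_0^{\,4n}\le\int_P\det\bigl(D^2v\bigr),
\]
where $P\subseteq B_1$ is the contact set of points at which $|Dv|<\epsilon_0/2$ and $v$ lies above its tangent hyperplane over all of $B_1$; in particular $D^2v\ge 0$ on $P$. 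Evaluating that tangent hyperplane at $0$ gives $v(x)\le v(0)+|Dv(x)|\,|x|<-L+\epsilon_0$ for $x\in P$, hence $\phi-\underline{\phi}<-L+\epsilon_0$ on $P$; since Lemma \ref{Lem:Harnack} (using $\sup_M\phi=0$ and the boundedness of $\underline{\phi}$) gives $\|\phi-\underline{\phi}\|_{L^1}\le C$, the sublevel set $\{\phi-\underline{\phi}<-L+\epsilon_0\}$ has measure $\le C/L$ once $L$ is large, so $|P|\le C/L$.

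It remains to bound the integrand on $P$. There $D^2v\ge 0$ forces $D^2(\phi-\underline{\phi})\ge -2\epsilon_0\,\mathrm{Id}$, hence $\sqrt{-1}\partial\bar\partial\phi-\sqrt{-1}\partial\bar\partial\underline{\phi}\ge -C_1\epsilon_0\,\omega$ in these coordinates, hence $\chi_\phi\ge\chi_{\underline{\phi}}-C_1\epsilon_0\,\omega$ and therefore $\lambda_k(A_\phi)\ge\lambda_k(A_{\underline{\phi}})-C_1\epsilon_0\ge\lambda_k(A_{\underline{\phi}})-\delta$ for every $k$; by the first step, $|\lambda(A_\phi)|\le R$ on $P$. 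In particular $\mathrm{tr}_\omega\chi_\phi$ is bounded on $P$, so $\mathrm{tr}_\omega(\sqrt{-1}\partial\bar\partial\phi)=\mathrm{tr}_\omega\chi_\phi-\mathrm{tr}_\omega\chi$, and hence the trace of $D^2\phi$, is bounded there; as $D^2\phi\ge D^2\underline{\phi}-2\epsilon_0\,\mathrm{Id}$ is also bounded below, $D^2v$ is bounded on $P$ and $\det(D^2v)\le C$ there. Combining with $|P|\le C/L$ we obtain $c_0\epsilon_0^{4n}\le C|P|\le C/L$, i.e.\ $L\le C$, as wanted.

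The step I expect to be the main obstacle is the preliminary one: making the $\mathcal{C}$-subsolution hypothesis quantitative (the uniform constants $\delta,R$) and checking that the convex-geometric argument of \cite[Proposition 6]{Szekelyhidi} genuinely carries over when the Hermitian eigenvalues are replaced by the eigenvalues of the hyperhermitian endomorphism $A_\phi$. The remaining ingredients --- the comparison between the real Hessian, the $(1,1)$-Hessian $\sqrt{-1}\partial\bar\partial\phi$ and $A_\phi$ in a fixed chart; the choice of $\epsilon_0$ small relative to $\delta$; and the use of Lemma \ref{Lem:Harnack} to confine $P$ to a set of small measure --- are routine. In line with the introduction, the hyperk\"ahler hypothesis plays no role here.
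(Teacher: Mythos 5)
Your proof is correct, and on one point it takes a genuinely simpler route than the paper. The broad architecture is the same — ABP maximum principle on a coordinate ball around the minimum, the quantitative $\mathcal{C}$-subsolution constants $\delta,R$, and the Harnack $L^p$ bound from Lemma \ref{Lem:Harnack} to control the contact set — and the paper even performs the same WLOG reduction (to $\underline\phi\equiv 0$, replacing $\chi$) that you keep explicit. Where you diverge is in how $\det(D^2v)$ is bounded on the contact set $P$. The paper chains two determinant inequalities, B\l ocki's $\det(D^2v)\le 2^{4n}\det(\mathrm{Hess}_{\mathbb C}v)^2$ and Sroka's quaternionic refinement $\det(\mathrm{Hess}_{\mathbb C}v)\le 2^{2n}\det\bigl(\tfrac{\mathrm{Hess}_{\mathbb C}v+J^t\mathrm{Hess}_{\mathbb C}vJ}{2}\bigr)$, and then only needs to bound the quaternion-averaged Hessian, which follows from $|\lambda(A_\phi)|\le R$. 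You instead observe that $D^2v\ge 0$ on $P$ together with the trace identity $\mathrm{tr}_\omega(\sqrt{-1}\partial\bar\partial\phi)=\mathrm{tr}_\omega\chi_\phi-\mathrm{tr}_\omega\chi$ (the same $J$-anti-invariance trick already used in Lemma \ref{Lem:Harnack}) and the eigenvalue bound $|\lambda(A_\phi)|\le R$ already give an upper bound on the trace of $D^2v$, and a symmetric matrix that is nonnegative with bounded trace has all eigenvalues bounded, so $\det(D^2v)\le C$ directly. This avoids both auxiliary determinant lemmas and is arguably cleaner; the paper's route is the one that generalizes when no trace identity is available. Two small points worth tidying: Lemma \ref{Lem:Harnack} gives an $L^p$ bound for some (possibly small) $p>0$, not an $L^1$ bound, so the Chebyshev step should read $|P|\,(L-\epsilon_0)^p\le\|\phi-\underline\phi\|_{L^p}^p\le C$ (the conclusion $L\le C$ is unaffected); and the translation from $A_\phi\ge A_{\underline\phi}-C_1\epsilon_0\,\mathrm{Id}$ to $\lambda(A_\phi)\in\lambda(A_{\underline\phi})-\delta\mathbf 1+\Gamma_n$ needs strict inequality because $\Gamma_n$ is open, so one should take $C_1\epsilon_0<\delta$ rather than $\le\delta$.
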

\begin{proof}
Without loss of generality we may assume $ \underline{\phi}=0 $, since we can always modify $ \chi $ in order to obtain $ \underline{\phi}=0 $. Since we assumed $ \sup_M \phi=0 $, the claim is equivalent to a lower bound for $ S=\inf_M \phi $, hence, we may assume $S\leq -1$.
		
Since $ \underline{\phi}=0 $ is a $ \mathcal{C} $-subsolution there are  $ \delta,R>0 $ such that
\begin{equation}\label{eqC0_1}
\left( \lambda\left( g^{-1}\chi\right)-\delta {\bf 1}+\Gamma_n \right)\cap \partial  \Gamma^{h(x)}\subseteq B_R(0)\,, \qquad \text{at every }x\in M\,,
\end{equation}
where $ {\bf 1}=(1,1,\dots,1) $. Pick $ I $-holomorphic coordinates $ (z^1,\dots,z^{2n}) $ centered at the point where $ \phi $ attains its minimum $ S $. We may identify such coordinate neighborhood with the open ball of unit radius $ B_1=B_1(0)\subseteq \C^{2n} $ centered at the origin. Let $ v(x)=\phi(x)+\epsilon|x|^2 $ be defined on $ B_1 $ for some small fixed $ \epsilon>0 $. Observe that $ \inf_{B_1} v=v(0)=\phi(0)=S $ and $ \inf_{\partial B_1}v\geq v(0)+\epsilon $. These conditions allow us to apply the ABP method (see \cite[Proposition 10]{Szekelyhidi}) to obtain
\begin{equation}
\label{eqC0_2}
C_0\epsilon^{4n}\leq \int_P \det(D^2v)\,,
\end{equation}
where $ C_0>0 $ is a constant depending on the dimension of $M$ only,
\[
P=\left\{ x\in B_1\mid |Dv(x)|<\frac{\epsilon}{2},\, v(y)\geq v(x)+Dv(x)\cdot (y-x) \text{ for all }y\in B_1 \right\}\,,
\]
and $ Dv $, $ D^2v $ are the gradient and the (real) Hessian of $ v $. Note that $ P\subseteq \{ x\in B_1 \mid D^2v(x)\geq 0  \} $, then, thanks to a calculation by B\l{}ocki \cite{Blocki},  we have
\begin{equation}\label{eqC0_3}
\det(D^2v) \leq 2^{4n}\det(\mathrm{Hess}_\C v)^2\,, \qquad \text{at every }x\in P\,.
\end{equation}
Applying \cite[Lemma 3.1]{Srokasharp} we also have
\begin{equation}\label{eqC0_6}
\det(\mathrm{Hess}_\C v)\leq 2^{2n} \det\left(    \frac{\mathrm{Hess}_\C v+J^t\mathrm{Hess}_\C v J}{2}\right)\,.
\end{equation}
Furthermore, since convexity implies plurisubharmonicity, we have $ \mathrm{Hess}_\C v(x)\geq 0 $ at any point $ x\in P $ and thus also $ \mathrm{Hess}_\C \phi(x)\geq -\epsilon {\rm Id}$, where $ {\rm Id}$ is the $2n \times 2n $ identity matrix. 
Choosing $ \epsilon $ small enough depending on $ g $ and $ \delta $, we have
\begin{equation}\label{eqC0_4}
\lambda\left( g^{-1}\left(\chi+\frac{\mathrm{Hess}_\C \phi+J^t\mathrm{Hess}_\C \phi J}{2}\right)\right)\in \lambda \left( g^{-1}\chi \right)-\delta {\bf 1}+\Gamma_n\,, \qquad \text{at every }x\in P\,.
\end{equation}
On the other hand, since $ \phi $ solves equation \eqref{eq_main} we also have
\begin{equation}\label{eqC0_5}
\lambda\left( g^{-1}\left(\chi+\frac{\mathrm{Hess}_\C \phi +J^t\mathrm{Hess}_\C \phi J}{2}\right)\right)\in \partial \Gamma^{h(x)}\,, \qquad \text{at every }x\in P\,.
\end{equation}
Together \eqref{eqC0_1}, \eqref{eqC0_4} and \eqref{eqC0_5} imply that $ |\mathrm{Hess}_\C \phi+J^t\mathrm{Hess}_\C \phi J|\leq C $ on $ P $ and thus also $ \mathrm{Hess}_\C v+J^t\mathrm{Hess}_\C v J \leq C $. Consequently, from \eqref{eqC0_2}, \eqref{eqC0_3} and \eqref{eqC0_6} we get
\[
C_0\epsilon^{4n}\leq C\mathrm{Vol}(P)
\]
By definition of $ P $ we have $ v(0)\geq v(x)-Dv(x)\cdot x>v(x)-\epsilon/2 $, i.e. $ v(x)<S+\epsilon/2<0 $ for all $ x\in P $. As a consequence for any $ p>0 $
\[
\|v\|^{p}_{L^p(M)}\geq \|v\|^{p}_{L^p(P)}=\int_P (-v)^p \geq 
\left \lvert S+\frac{\epsilon}{2}\right \rvert^p\mathrm{Vol}(P)\geq C^{-1}C_0\epsilon^{4n}\left  \lvert S+\frac{\epsilon}{2}\right \rvert^p  \,.
\]
Applying Lemma \ref{Lem:Harnack} we find a $ p>0 $ such that $ \|v\|_{L^p} $ is bounded, therefore we conclude.
\end{proof}

\section{Laplacian estimate}\label{Laplacian}
In this section we establish the upper bound of the Laplacian of solutions to \eqref{main}:
	
\begin{prop} \label{Prop:C2}
Let $ (M^{4n},I,J,K,g) $ be a compact hyperk\"ahler manifold. Let $ \underline \phi,\phi $ be a $ \mathcal{C} $-subsolution and a solution to \eqref{eq_main} respectively. Then there is a constant $ C>0 $, depending only on $ (M,I,J,K,g) $, $ \|h\|_{C^2} $, $ \|\chi \|_{C^2} $, $ \|\phi \|_{C^0} $ and $ \underline{\phi} $, such that
\[
\Delta_g \phi \leq C\left( \|\nabla \phi \|_{C^0}+1 \right)\,.
\]
\end{prop}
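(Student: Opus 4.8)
The plan is to establish the bound by a second-order maximum-principle argument for a test function built from the largest eigenvalue of $A_\phi$, in the spirit of Chou--Wang and Hou--Ma--Wu as adapted by Sz\'ekelyhidi \cite{Szekelyhidi}, using the torsion-free nature of a hyperk\"ahler metric to keep the gradient-dependence linear. As in Section~\ref{C0} we may assume $\underline\phi=0$ after modifying $\chi$. Let $\lambda_1\ge\cdots\ge\lambda_n$ be the ordered quaternionic eigenvalues of $A_\phi$ and consider
\[
Q=\log\lambda_1+\varphi\bigl(|\nabla\phi|^2\bigr)+\psi(\phi),
\]
with $\varphi$ increasing of essentially linear growth (so that $\varphi'$ stays bounded below) and $\psi$ monotone and large in absolute value, both to be pinned down in the course of the computation. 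Since $\mathrm{tr}_\omega\chi_\phi$ is, up to a positive constant, $\sum_i\lambda_i\le n\lambda_1$, and $\Delta_g\phi$ equals, up to a positive constant, $\mathrm{tr}_\omega\chi_\phi-\mathrm{tr}_\omega\chi$ (the identity recalled in Section~\ref{C0}), it is enough to prove $\lambda_1(p)\le C(\|\nabla\phi\|_{C^0}+1)$ at the point $p$ where $Q$ is maximal; maximality of $Q$ then propagates the bound to all of $M$, using that $\varphi$ has controlled oscillation on $[0,\|\nabla\phi\|_{C^0}^2]$, that $\psi$ is bounded, and that $\phi$ is already bounded by Proposition~\ref{Prop:C0}.

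At $p$ I would choose $I$-holomorphic coordinates that are $g$-normal and in which $A_\phi$ is diagonal, replacing $\lambda_1$ by the standard smooth perturbation (necessary since in the quaternionic setting $\lambda_1$ has even multiplicity). Here is where the hyperk\"ahler hypothesis is decisive: $I,J,K$ are Levi-Civita parallel, so the Chern connection of $(M,I,g)$ coincides with the Levi-Civita connection, covariant differentiation commutes with the operator $\phi\mapsto J\partial\bar\partial\phi$ in $\chi_\phi$, and since the metric is Kähler and Ricci-flat the commutation of covariant derivatives introduces no torsion and only curvature terms that are readily controlled. Writing $L:=F^{i\bar j}\nabla_i\nabla_{\bar j}$ for (a rewriting, via the $J$-anti-invariance, of) the linearized operator, I use $\nabla Q(p)=0$ and $L(Q)(p)\le 0$. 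Differentiating $F(A_\phi)=h$ once gives $F^{i\bar j}\nabla_k(A_\phi)_{i\bar j}=\nabla_k h$; differentiating twice in the direction $e_1$ gives
\[
F^{i\bar j}\nabla_1\nabla_{\bar1}(A_\phi)_{i\bar j}+F^{i\bar j,k\bar l}\,\nabla_1(A_\phi)_{i\bar j}\,\nabla_{\bar1}(A_\phi)_{k\bar l}=\nabla_1\nabla_{\bar1}h,
\]
and by Assumption~2 the second term on the left is $\le 0$ --- the crucial good sign supplied by concavity.

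Feeding this into the first- and second-order eigenvalue perturbation formulas for $L(\log\lambda_1)$ produces the negative term $-F^{i\bar j}|\nabla_i\lambda_1|^2/\lambda_1^2$, the positive eigenvalue-gap terms $\sum_{q>1}(\lambda_1-\lambda_q)^{-1}F^{i\bar i}|\nabla_i(A_\phi)_{1\bar q}|^2$, and error terms of size $C\sum_k F^{k\bar k}$; the contribution of $\nabla^2h$ is controlled using the structural inequality $F^{i\bar j}(A_\phi)_{i\bar j}\le C\sum_k F^{k\bar k}$ highlighted in the Introduction, which applies since at every point $\lambda(A_\phi)\in\partial\Gamma^{h}$ with $h$ confined to a compact subinterval of $(\sup_{\partial\Gamma}f,\sup_\Gamma f)$. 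The term $L(\varphi(|\nabla\phi|^2))$ produces the reservoir $\varphi'\sum_{k,l}F^{k\bar k}\bigl(|\nabla_k\nabla_l\phi|^2+|\nabla_k\nabla_{\bar l}\phi|^2\bigr)$ of positive terms, the harmless term $-C\varphi'|\nabla\phi|^2\sum_k F^{k\bar k}$, and a third-order cross term which, thanks to the once-differentiated equation and the torsion-free, Ricci-flat structure, reduces to a quantity of size $C\|\nabla\phi\|_{C^0}\sum_k F^{k\bar k}$ rather than $C\|\nabla\phi\|_{C^0}^2\sum_k F^{k\bar k}$ --- this is precisely the gain over \cite[Proposition~13]{Szekelyhidi}. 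Finally $L(\psi(\phi))=\psi'\,F^{i\bar j}\nabla_i\nabla_{\bar j}\phi+\psi''\,F^{i\bar j}\nabla_i\phi\,\nabla_{\bar j}\phi$, with the second summand $\ge 0$ for $\psi$ convex.

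The heart of the matter --- and the step I expect to be the main obstacle --- is the bookkeeping of the remaining third-order terms (those involving $\nabla(A_\phi)$) appearing both in $L(\log\lambda_1)$ and in $L(\varphi(|\nabla\phi|^2))$. I would split the indices according to whether $\lambda_i$ is comparable to $\lambda_1$ or much smaller (the Hou--Ma--Wu dichotomy \cite{Hou-Ma-Wu}), use concavity together with the once-differentiated equation on the first block and Cauchy--Schwarz on the second, and absorb the third-order contributions into $-F^{i\bar j}|\nabla_i\lambda_1|^2/\lambda_1^2$, the gap terms, and the $\varphi'$ reservoir; balancing these absorptions is exactly what dictates the (essentially linear) choice of $\varphi$. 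To close, I would invoke the $\mathcal C$-subsolution hypothesis through the usual dichotomy: at $p$, either $\lambda_1$ is already bounded by a controlled constant, or $F^{i\bar j}\bigl((A_{\underline\phi})_{i\bar j}-(A_\phi)_{i\bar j}\bigr)\ge\kappa\sum_k F^{k\bar k}$ for a uniform $\kappa>0$; in the latter case, choosing the constant in $\psi$ large enough turns $L(Q)(p)\le 0$ into an inequality that forces $\lambda_1(p)\le C(\|\nabla\phi\|_{C^0}+1)$. Combining with the reduction in the first paragraph completes the proof.
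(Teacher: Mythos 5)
Your overall architecture is the right one — a maximum‑principle argument for a test function built from the largest eigenvalue of $A_\phi$, normal $I$‑holomorphic coordinates at the max with $J$ in standard form, the $\mathcal C$‑subsolution dichotomy (Lemma~\ref{dichotomy}), and concavity to close. But there is a material gap in the specific choice of test function, and this is not a cosmetic issue: it is precisely what produces the \emph{linear} (rather than quadratic) gradient dependence in the conclusion.

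You take $Q=\log\lambda_1+\varphi(|\nabla\phi|^2)+\psi(\phi)$. The paper takes $Q=2\sqrt{\tilde\lambda_1}+\alpha(|\nabla\phi|^2_g)+\beta(\phi)$ with $\alpha(t)=-\tfrac12\log(1-t/(2N))$, $N=\|\nabla\phi\|_{C^0}^2+1$, and $\beta(t)=-2Dt+\tfrac12 t^2$. The difference matters as follows. From $L$ applied to the eigenvalue term one picks up a negative quadratic term in $\tilde\lambda_{1,\bar k}$; with $2\sqrt{\tilde\lambda_1}$ this is $-\tfrac{1}{2\lambda_1\sqrt{\lambda_1}}F^{k\bar k}|\tilde\lambda_{1,\bar k}|^2$, while with $\log\lambda_1$ it is $-\tfrac{1}{\lambda_1^2}F^{k\bar k}|\tilde\lambda_{1,\bar k}|^2$. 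After the first‑order condition $\tilde Q_k(x_0)=0$, the $\alpha''=2(\alpha')^2$ term yields, for any $\epsilon\in[0,1)$, a lower bound $\tfrac{2\epsilon}{\lambda_1\,(\cdot)}F^{k\bar k}|\tilde\lambda_{1,\bar k}|^2-\tfrac{2\epsilon}{1-\epsilon}(\beta')^2F^{k\bar k}|\phi_k|^2$ (where the placeholder is $1$ for the square root and $\lambda_1$ for the log). To absorb the negative eigenvalue term one needs $\tfrac{2\epsilon}{\lambda_1}\geq\tfrac{1}{2\lambda_1\sqrt{\lambda_1}}$ in the square‑root case, i.e.\ $\sqrt{\lambda_1}\geq\tfrac{1}{4\epsilon}$, which is automatic for $\lambda_1$ large however small $\epsilon$ is; but in the log case one needs $\epsilon\geq\tfrac12$, which is incompatible with the other requirement $\beta''-\tfrac{2\epsilon}{1-\epsilon}(\beta')^2\geq0$ once $D$ is large. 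With $\log\lambda_1$ you are forced into the Hou--Ma--Wu/Sz\'ekelyhidi index‑dichotomy, and that route delivers $\lambda_1\leq C(\|\nabla\phi\|_{C^0}^2+1)$, not the linear bound asserted in Proposition~\ref{Prop:C2}. So the linear gain in the paper comes from the $2\sqrt{\tilde\lambda_1}$ choice making the absorption direct, with the $\tfrac{1}{4N}F^{k\bar k}|g^\phi_{k\bar k}|^2\gtrsim\tfrac{\kappa\mathcal F\lambda_1^2}{4N}$ reservoir then yielding $\lambda_1^2\leq CN$ and hence $\lambda_1\lesssim\|\nabla\phi\|_{C^0}+1$ --- \emph{not}, as you suggest, from a torsion cross term in $L(\varphi(|\nabla\phi|^2))$ dropping from $\|\nabla\phi\|^2$ to $\|\nabla\phi\|$.

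A second, related point: you mention ``covariant differentiation commutes with $\phi\mapsto J\partial\bar\partial\phi$'' but do not isolate the specific place where hyperk\"ahler geometry is spent. The paper's key technical step is the swap inequality $\sum_{p=1}^2 F^{k\bar k}g^\phi_{p\bar p,k\bar k}\geq\sum_{p=1}^2 F^{k\bar k}g^\phi_{k\bar k,p\bar p}-C\mathcal F$ at $x_0$, proved using the symmetry $\partial_{\bar p}J^a_{\bar q}=\partial_{\bar q}J^a_{\bar p}$ in $I$‑holomorphic coordinates and the vanishing of the first derivatives of $J$ at $x_0$ (since $J$ is Levi‑Civita parallel), so that $\sum_{p=1}^2\partial_{\bar p}\Gamma^q_{pk}(x_0)=0$. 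This swap is what lets the twice‑differentiated equation, combined with concavity, give $F^{k\bar k}\tilde\lambda_{1,k\bar k}\geq-C\mathcal F$; without it the second covariant derivatives of $g^\phi$ appear with indices in the wrong order and the argument stalls. You would need to state and use this explicitly. Finally, one small technical point you omit: in the quaternionic setting the top eigenvalue has multiplicity at least two (as a complex eigenvalue), so the standard smoothing perturbation must be done carefully — the paper subtracts a carefully graded constant diagonal matrix $B$ so that the quaternionic eigenvalues of $\tilde A=A_\phi-B$ are simple near $x_0$.
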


In order to prove Proposition \ref{Prop:C2} we need the following lemma whose proof is completely analogous to the one of \cite[Proposition 6]{Szekelyhidi}:

\begin{lem}\label{dichotomy}
Let $a,b \in \R$ be such that $ \sup_{\partial \Gamma}f<a<b<\sup_{\Gamma} f $ and let $ \delta, R>0 $. Then there exists a constant $ \kappa>0 $ such that for any $ \sigma \in [a,b] $, every hyperhermitian matrix $ B $ satisfying
\[
\left( \lambda(B)-2\delta {\bf 1}+\Gamma_n \right)\cap \partial \Gamma^\sigma \subseteq B_R(0)\,,
\]
and every hyperhermitian matrix $A $ satisfying $ \lambda(A)\in \partial \Gamma^\sigma $ and $ |\lambda(A)|>R $, we have either
\[
F^{j\bar k}(A)\left( B_{j\bar k}-A_{j\bar k} \right)>\kappa \sum_{k=1}^{2n} F^{k\bar k}(A)
\]
or
\[
F^{j\bar j}(A)>\kappa \sum_{k=1}^{2n} F^{k\bar k}(A)\,, \qquad \text{for all } j=1,\dots,2n\,.
\]
\end{lem}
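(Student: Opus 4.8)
The plan is to reduce Lemma~\ref{dichotomy} to a finite–dimensional statement about $f$ and the convex superlevel sets $\Gamma^\sigma$, and then to run the compactness argument of \cite[Proposition~6]{Szekelyhidi} essentially verbatim. For the reduction, recall that $F$ depends only on the eigenvalues, so it is invariant under a unitary change of frame adapted to the quaternionic structure, and we may assume that $A$ is diagonal. Regarding the hyperhermitian $n\times n$ quaternionic matrix $A$ as a $2n\times 2n$ complex Hermitian matrix, each eigenvalue $\lambda_i=\lambda_i(A)$ appears with multiplicity two, $F^{j\bar k}(A)$ is the diagonal matrix with entries $f_1(\lambda(A)),f_1(\lambda(A)),\dots,f_n(\lambda(A)),f_n(\lambda(A))$, and hence $\sum_{k=1}^{2n}F^{k\bar k}(A)=2\sum_{i=1}^{n}f_i(\lambda(A))$; in particular the condition ``$F^{j\bar j}(A)>\kappa\sum_k F^{k\bar k}(A)$ for all $j$'' becomes ``$f_i(\lambda(A))>2\kappa\sum_k f_k(\lambda(A))$ for all $i$''. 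Writing $Df(\lambda)=(f_1(\lambda),\dots,f_n(\lambda))$ and $\mu=\lambda(B)$, one also has
$$
F^{j\bar k}(A)\bigl(B_{j\bar k}-A_{j\bar k}\bigr)=\sum_{j=1}^{2n}f_{\lceil j/2\rceil}(\lambda(A))\,\bigl(B_{j\bar j}-A_{j\bar j}\bigr)\geq 2\,Df(\lambda(A))\cdot\bigl(\mu-\lambda(A)\bigr)\,,
$$
the last inequality following from Schur--Horn (the real diagonal of $B$ in the chosen frame is majorised by $(\mu_1,\mu_1,\dots,\mu_n,\mu_n)$) together with a rearrangement argument, for a suitable ordering of the eigenvalues of $A$ and $B$; such an ordering is immaterial since the hypotheses and conclusion of the statement below are permutation invariant. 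Thus it suffices to prove: given $a<b$ as in the hypothesis and $\delta,R>0$, there exists $\kappa'>0$ so that for every $\sigma\in[a,b]$, every $\mu\in\R^n$ with $(\mu-2\delta{\bf 1}+\Gamma_n)\cap\partial\Gamma^\sigma\subseteq B_R(0)$, and every $\lambda\in\partial\Gamma^\sigma$ with $|\lambda|>R$, one has $Df(\lambda)\cdot(\mu-\lambda)>\kappa'\sum_j f_j(\lambda)$ or $f_i(\lambda)>\kappa'\sum_j f_j(\lambda)$ for every $i$.

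For this finite–dimensional statement I would argue by contradiction. If it fails, pick $\sigma_m\in[a,b]$, $\mu_m$, $\lambda_m\in\partial\Gamma^{\sigma_m}$ with $|\lambda_m|>R$, and indices $j_m$ so that both alternatives fail with $\kappa'=1/m$. Normalise the gradient by $\widehat f_m:=Df(\lambda_m)/\sum_i f_i(\lambda_m)$, a point of the open unit simplex; after passing to subsequences assume $\sigma_m\to\sigma\in[a,b]$, $\widehat f_m\to\widehat f$ in the closed simplex, and $j_m\equiv j_0$, so $\widehat f_{j_0}=0$. One checks $|\lambda_m|\to\infty$: otherwise $\lambda_m$ subconverges to some $\lambda_\infty\in\partial\Gamma^\sigma$, which by Assumption~3 lies in the interior of $\Gamma$ (a boundary point would force $\sigma\le\sup_{\partial\Gamma}f<a$), whence $\widehat f_{j_0}=f_{j_0}(\lambda_\infty)/\sum_i f_i(\lambda_\infty)>0$ by Assumption~2, a contradiction. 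Concavity of $f$ gives the supporting half-space $\overline{\Gamma^{\sigma_m}}\subseteq\{x:\widehat f_m\cdot(x-\lambda_m)\ge0\}$, and by Assumptions~1, 2 and 4 the ray $t\mapsto\mu_m-2\delta{\bf 1}+t{\bf 1}$ eventually enters $\Gamma$ with $f\to\sup_\Gamma f>b$ along it. If already $\mu_m-\delta{\bf 1}\in\Gamma$ with $f(\mu_m-\delta{\bf 1})\geq\sigma_m$, applying the half-space inequality to $\mu_m-\delta{\bf 1}$ gives $Df(\lambda_m)\cdot(\mu_m-\lambda_m)\geq\delta\sum_i f_i(\lambda_m)$, contradicting the failure of the first alternative once $1/m<\delta$. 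Otherwise the ray crosses $\partial\Gamma^{\sigma_m}$ at a point $q_m=\mu_m-2\delta{\bf 1}+t_m{\bf 1}$ with $t_m>\delta>0$, hence $q_m\in B_R(0)$; since $q_m$ and $q_m+\overline{\Gamma_n}$ lie in $\overline{\Gamma^{\sigma_m}}$ we get $\widehat f_m\cdot(q_m-\lambda_m)\geq0$, so $\widehat f_m\cdot\lambda_m\le R$, and writing $\mu_m=q_m+(2\delta-t_m){\bf 1}$ and using $\widehat f_m\cdot{\bf 1}=1$ with the failure of the first alternative yields again $t_m\geq 2\delta-1/m$. Letting $m\to\infty$, and using these bounds, one extracts a hyperplane with normal $\widehat f$ supporting $\overline{\Gamma^\sigma}$ and with $\widehat f_{j_0}=0$; but this is incompatible with the uniform boundedness of $(\mu_m-2\delta{\bf 1}+\Gamma_n)\cap\partial\Gamma^{\sigma_m}$ together with $\lambda_m$ escaping to infinity along $\partial\Gamma^{\sigma_m}$ --- exactly as in \cite{Szekelyhidi} --- and this contradiction finishes the proof.

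The hard part is this last step: controlling the position of the crossing point $q_m$, of the parameter $t_m$, and hence of the support value $\widehat f_m\cdot\lambda_m$, as $\lambda_m$ runs to infinity along the level sets, and extracting from the very definition of $\mathcal{C}$-subsolution enough rigidity of the sets $\{f=\sigma\}$ to exclude a supporting hyperplane of $\overline{\Gamma^\sigma}$ whose normal has a vanishing component. This is precisely what \cite[Proposition~6]{Szekelyhidi} establishes, and the argument there goes through unchanged; the only genuinely new point in the quaternionic setting is the bookkeeping of the first paragraph, forced by the fact that a hyperhermitian matrix, viewed as a complex matrix, has all its eigenvalues of even multiplicity.
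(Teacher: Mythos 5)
Your proposal is correct and follows the same route as the paper, which gives no independent proof of this lemma but simply observes that Sz\'ekelyhidi's Proposition~6 carries over ``completely analogously.'' Your first paragraph makes explicit the quaternionic bookkeeping the paper leaves implicit---diagonalizing $A$ in a quaternion-adapted unitary frame, identifying $F^{j\bar j}(A)=f_{\lceil j/2\rceil}(\lambda)$, and combining the Schur--Horn majorization of the diagonal of $B$ by $(\mu_1,\mu_1,\dots,\mu_n,\mu_n)$ with the monotonicity $f_1\leq\dots\leq f_n$ (from concavity of the symmetric $f$) to reduce to the eigenvalue dichotomy---and the remaining compactness argument is Sz\'ekelyhidi's, as you acknowledge.
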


Now we are ready to prove Proposition \ref{Prop:C2}:
\begin{proof}[Proof of Proposition $\ref{Prop:C2}$]
Consider the quantity
\[
Q=2\sqrt{\lambda_1}+\alpha(|\nabla \phi |^2_g) + \beta(\phi)
\]
where $ \lambda_1 \colon M \to \R $ is the largest eigenvalue of the matrix $ A_\phi $ and
\[
\alpha,\beta \colon \R \to \R \,, \qquad \alpha(t)=-\frac{1}{2}\log \left( 1- \frac{t}{2N} \right)\,, \qquad \beta(t)=-2Dt+\frac{1}{2}t^2\,,
\]
being $N=\|\nabla \phi \|^2_{C^0}+1$ and $ D>\|\phi\|_{C^0} $ a large constant we will determine later. Note that
\begin{align*}
(4N)^{-1}&\leq \alpha'(|\nabla \phi|^2_g) \leq (2N)^{-1}\,, & \alpha''&=2(\alpha')^2\,,\\
-3D &\leq \beta'(\phi) \leq -D\,, & \beta''&\equiv 1\,.
\end{align*}
Let $ x_0\in M $ be a maximum point of $ Q $. In order to prove the statement, it is enough to show that 
\begin{equation}\label{atx0}
\lambda_1(x_0)\leq C\left( \|\nabla \phi \|_{C^0}^2+1 \right)
\end{equation}
for a positive constant $ C $. Indeed, since  
\begin{equation}\label{eq:glob}
2\sqrt{\lambda_1(x)}+\alpha(|\nabla \phi(x) |^2_g) + \beta(\phi(x)) \leq 2\sqrt{\lambda_1(x_0)}+\alpha(|\nabla \phi(x_0) |^2_g) + \beta(\phi(x_0)) 
\end{equation}
at every point $x\in M$ 
and 
$$
0\leq \alpha(|\nabla \phi|^2_g)\leq \frac{1}{2}\log 2\,,
$$ 
then if 	\eqref{atx0} is true we can conclude that 
\[
\lambda_1(x)\leq C'\left( \|\nabla \phi \|_{C^0}^2+1 \right)\,, \quad \mbox{ for every }x\in M\,,
\]
where $ C' $ depends on $ C $, $ D $ and $ \|\phi\|_{C^0} $.

\medskip 
The function $ \lambda_1 $ could be non smooth near $x_0$. In order to overcome this problem we modify its definition as follows. Consider $I$-holomorphic coordinates around $x_0$ such that 
\begin{itemize}
\item the coordinates are normal with respect to $g$ at $x_0$, i.e. $g_{i\bar j}(x_0)=\delta_{i \bar j}$ and $\partial_rg_{i\bar j}(x_0)=0$;

\vspace{0.1cm}
\item $J$ takes its standard form at $x_0$;

\vspace{0.1cm}
\item $g_\phi$ is diagonal at $x_0$;

\vspace{0.1cm}
\item the eigenvalues of $A_\varphi$ regarded as complex matrix are non-increasing. 
\end{itemize} 
Note that in these coordinates since $J$ is parallel with respect to the Levi-Civita connection of $g$, its first derivatives vanish at $x_0$. Moreover, $A_\varphi(x_0)$ takes the following diagonal   expression 
$$
A_\varphi(x_0)=\mathrm{diag}(\lambda_1(x_0),\lambda_1(x_0),\lambda_2(x_0),\lambda_2(x_0),\dots,\lambda_{n}(x_0),\lambda_{n}(x_0))\,.
$$

Let 
$$ 
\tilde A=A_\phi-B\,,
$$ 
where $B$ is a constant matrix of the form 
$$ 
B=\mathrm{diag}(0,0,B_{2\bar 2},B_{2\bar 2},\dots,B_{n\bar n},B_{n\bar n}) 
$$ 
and the components $B_{r\bar r}$ satisfy 
\[
0<B_{2\bar 2}<B_{3\bar 3}<\cdots<B_{n\bar n}<2B_{2\bar 2}\,.
\]
Since the eigenvalues $ \{ \tilde \lambda_1,\dots,\tilde \lambda_n\} $ of $\tilde A$ regarded as  a  quaternionic matrix are distinct at $ x_0 $, they remain distinct in a neighborhood of $x_0$ and so $ \tilde \lambda_1 $ is a smooth function near $x_0$ such that $\tilde \lambda_1(x_0)=\lambda_1(x_0)$. 

\medskip 
We then replace $Q$ with 
$$
\tilde Q=2\sqrt{\tilde \lambda_1}+\alpha(|\nabla \phi |^2_g) + \beta(\phi)
$$
which still achieves a maximum at the point $x_0$.

\medskip 
Let 
$$
L(u):= F^{i\bar j}(A_\varphi)u_{i\bar j} 
$$ 
be the linearized operator of $F$. And note that $ F^{i\bar j}(A_\varphi)$ is diagonal at $ x_0 $. 

From  now on we write $ F^{i\bar j}$ instead of $ F^{i\bar j}(A_\varphi)$ in order to simplify the notation.  We have 
\[
\tilde Q_k(x_0)=0 
\]
and 
$$
L(\tilde Q)=L\left( 2\sqrt{\tilde \lambda_1} \right)+L\left(\alpha(|\nabla \phi |^2_g)\right) + L(\beta(\phi))\leq 0\quad \mbox{ at }x_0\,.
$$
We aim to show that this last inequality implies \eqref{atx0}. Here we handle the three terms of $L(\tilde Q)$ separately.

\medskip 
We have 
\begin{equation*}\label{eq:loglambda}
L\left( 2\sqrt{\tilde \lambda_1} \right)=F^{k\bar k} \left( \frac{\tilde \lambda_{1,k\bar k}}{\sqrt{\lambda_1}} - \frac{|\tilde \lambda_{1,\bar k}|^2}{2\lambda_1\sqrt{\lambda_1}} \right) \quad \mbox{ at }x_0\,,
\end{equation*}
where
\begin{align}\label{eq:C2_der1_lambda}
\tilde \lambda_{1,k}&=\frac{ \partial \tilde\lambda_1}{\partial \tilde A_{i\bar j}} \tilde A_{i\bar j, k}\,, \qquad \tilde \lambda_{1,\bar k}=\frac{ \partial \tilde\lambda_1}{\partial \tilde A_{i\bar j}} \tilde A_{i\bar j,\bar k}\,,\\
\label{eq:C2_der2_lambda}
\tilde \lambda_{1,k\bar k}&=\frac{ \partial^2 \tilde\lambda_1}{\partial \tilde A_{i\bar j}\partial \tilde A_{a\bar b}} \tilde A_{i\bar j, k} \tilde A_{a\bar b,\bar k}+\frac{ \partial \tilde\lambda_1}{\partial \tilde A_{i\bar j}} \tilde A_{i\bar j,k\bar k}\,.
\end{align}
The indexes after the comma denote covariant derivatives with respect to the Levi-Civita connection of $g$. In order to compute the derivatives of $\tilde \lambda_1$ with respect to the entries of the matrix $\tilde A$, since $J$ takes the standard form
\[
J=\begin{pmatrix}
0 & -1 &        &   &\\
1 & 0  &        &   &\\
  &    & \ddots &   &\\
  &    &        & 0 & -1 \\
  &    &        & 1 & 0 \\
\end{pmatrix}
\]
at $ x_0 $, we consider the following basis of hyperhermitian matrices:
$$
\{E_{2r-1\,2s},E_{2r\,2s},E_{2r\,2r}\}\,,\quad  r<s\,,
$$
where 
$$
(E_{2r-1\,2s})_{i\bar j}=\begin{cases}
\,\,\,\, 1 \mbox{ if } (i,j)=(2r-1,2s), \,(2s,2r-1)\\
-1 \mbox{ if } (i,j)=(2r,2s-1),\,(2s-1,2r)\\
\quad\! 0 \mbox{ otherwise}
\end{cases}\quad r<s
$$
and
$$
(E_{2r\,2s})_{i\bar j}=\begin{cases}
\,\,\,\, 1 \mbox{ if } (i,j)=(2r,2s), \,(2s,2r),\,(2r-1,2s-1),\,(2s-1,2r-1)\\
\quad\! 0 \mbox{ otherwise}
\end{cases}\quad r\leq s\,.
$$ 
Since $\tilde A$ is diagonal at $x_0$, for $r<s$ and $ t\in \R $ we have 
\[
\det(\tilde A+tE_{2r-1\,2s}-\lambda \mathrm{Id})=\det(\tilde A+tE_{2r\,2s}-\lambda \mathrm{Id})=\left(\lambda^2-\lambda(\tilde \lambda_r +\tilde \lambda_s)+\tilde \lambda_r \tilde \lambda_s -t^2\right)^2\prod_{\substack{k=1 \\ k\neq r,s}}^{n} (\tilde \lambda_k-\lambda)^2
\]
at $x_0$, 
and
\[
\det(\tilde A+tE_{2r\,2r}-\lambda \mathrm{Id})=\left(\tilde \lambda_r+t-\lambda\right)^2\prod_{\substack{k=1 \\ k\neq r}}^{n} (\tilde \lambda_k-\lambda)^2
\]
at $x_0$. In particular
\[
\lambda_1(\tilde A+tE_{p\,2s})=\begin{cases}
\tilde \lambda_1+t & \text{if } p=2s=2\,,\\
\frac{\tilde \lambda_1+\tilde \lambda_s}{2}+\left(\left(\frac{\tilde \lambda_1-\tilde \lambda_s}{2}\right)^2+t^2\right)^{1/2} & \text{if }p\in \{1,2\}\,,\,s> 1\,,\\
\tilde \lambda_1 & \text{otherwise}
\end{cases}
\]
at $x_0$, where $\lambda_1(\tilde A+tE_{p\,2s})$ denotes the first eigenvalue of $\tilde A+tE_{p\,2s}$.    It follows that
\[
\frac{\partial \tilde \lambda_1}{\partial \tilde A_{i\bar j}}=\delta_{i\bar 1}\delta_{1\bar j}+\delta_{i\bar 2}\delta_{2\bar j}
\]
at $x_0$. 
Similarly, one can compute
\[
\frac{\partial^2 \tilde \lambda_1}{\partial\tilde A_{i\bar j}\partial \tilde A_{a\bar b}}=\begin{cases}
    \frac{2}{\tilde \lambda_1-\tilde \lambda_s} & \text{if } (i,j)=(a,b) \,,\,\, i\in \{1,2\}\,,\,\, j>2\,,\,\,j=2s-1 \text{ or }j=2s\,,\\
    0 & \text{otherwise}
\end{cases}
\]
at $x_0$. Therefore, taking into account that $B_{i\bar j,\bar k}=0$, since $B$ is constant in the chosen neighborhood and the Christoffel symbols of the Levi-Civita connection vanish at $x_0$, equations \eqref{eq:C2_der1_lambda} and \eqref{eq:C2_der2_lambda} reduce at $x_0$ to 
\begin{align}\label{eq:der1_lambda}
\tilde \lambda_{1,k}&
=g^\phi_{1\bar 1, k}+g^\phi_{2\bar 2, k}
\,, \qquad \qquad \tilde \lambda_{1,\bar k}
=g^\phi_{1\bar 1,\bar k}+g^\phi_{2\bar 2,\bar k}\,,\\
\begin{split}\label{eq:der2_lambda}
\tilde \lambda_{1,k\bar k}
&=2\sum_{s>1}\sum_{p=1}^2 \frac{|g^\phi_{2s-1\, \bar p,k}|^2+|g^\phi_{p\, \overline{2s-1}, k}|^2+|g^\phi_{2s\, \bar p,k}|^2+|g^\phi_{p\, \overline{2s}, k}|^2}{\lambda_1-\tilde \lambda_s}
+g^\phi_{1\bar 1,k\bar k}+g^\phi_{2\bar 2,k\bar k}\,.
\end{split}
\end{align}

\medskip 
The next step consists in showing that 
\begin{equation}\label{eq:swap}
\sum_{p=1}^2F^{k\bar k} g^\phi_{p\bar p,k\bar k}\geq  \sum_{p=1}^2F^{k\bar k} g^\phi_{k\bar k,p\bar p} - C\mathcal{F}
\quad \mbox{ at }x_0\,,
\end{equation}
where 
$$
\mathcal{F}=\sum_{k=1}^{2n}F^{k\bar k}(x_0)\,. 
$$
Note that, by \cite[Lemma 9]{Szekelyhidi} we have $ \mathcal{F}>\tau>0 $. Taking into account that $\Gamma_{ab}^c(x_0)=0$, we compute
\[
g^\phi_{p\bar p,k\bar k}=\partial_{\bar p} \partial_p g^\phi_{k\bar k}-\overline{\Gamma_{pk}^r}\partial_pg^\phi_{k\bar r}-\partial_{\bar p}\Gamma^q_{pk}g^\phi_{q\bar k}-\Gamma^q_{pk}\partial_{\bar p}g^\phi_{q\bar k}+\Gamma^q_{pk}\overline{\Gamma^{r}_{pk}}g^\phi_{q\bar r}=\partial_{\bar p} \partial_p g^\phi_{k\bar k}-\partial_{\bar p}\Gamma^q_{pk}g^\phi_{q\bar k}
\]
at $x_0$. Moreover, using that $\Gamma_{pk}^q=-\partial_kJ_p^{\bar s}J_{\bar s}^q$ (cf. \cite[Proof of Lemma 4.3]{BGV2}) and the fact that the first derivatives of $ J $ vanish at $ x_0 $, we deduce
\begin{equation}\label{eq:christ}
\begin{split}
\sum_{p=1}^2\partial_{\bar p}\Gamma^q_{pk}&=-\sum_{p=1}^2 \partial_{\bar p}\partial_pJ^{\bar s}_kJ_{\bar s}^q=-\sum_{p=1}^2 \partial_{\bar p}\partial_kJ^{\bar s}_pJ_{\bar s}^q=-\sum_{p=1}^2 (\partial_{\bar p}\partial_k(J^{\bar s}_pJ_{\bar s}^q)-J^{\bar s}_p\partial_{\bar p}\partial_kJ_{\bar s}^q)\\
&=\sum_{p=1}^2 J^{\bar s}_p\partial_{\bar p}\partial_kJ_{\bar s}^q=\partial_{\bar 1}\partial_kJ_{\bar 2}^q-\partial_{\bar 2}\partial_kJ_{\bar 1}^q=0
\end{split}
\end{equation}
at $x_0$, because $ \partial_{\bar p}J_{\bar q}^a=\partial_{\bar q}J_{\bar p}^a $ in any $ I $-holomorphic coordinate system (see \cite[Remark 2.13]{Dinew-Sroka}). So 
$$
\sum_{p=1}^2g^\phi_{p\bar p,k\bar k}=\sum_{p=1}^2 \partial_{\bar p} \partial_p g^\phi_{k\bar k}
$$
at $x_0$. Since $F^{j\bar k}$ satisfies $ F^{j\bar k}=F^{a \bar b}J_a^{\bar k}J_{\bar b}^j $ we have
\[
\begin{split}
\sum_{p=1}^2F^{k\bar k} \partial_{\bar p} \partial_p g^\phi_{k\bar k} &\geq \frac{1}{2}\sum_{p=1}^2F^{k\bar k}\left(J_k^{\bar b} J_{\bar k}^a  \partial_{\bar p} \partial_p\phi_{a\bar b}+ \partial_{\bar p} \partial_p J_k^{\bar b} J_{\bar k}^a \phi_{a\bar b}+ J_k^{\bar b} \partial_{\bar p} \partial_p J_{\bar k}^a \phi_{a\bar b}\right) -C \mathcal{F}\\
&=\frac{1}{2}\sum_{p=1}^2F^{a\bar a} \partial_{\bar p} \partial_p\phi_{a\bar a}
-\frac{1}{2}\sum_{p=1}^2F^{a\bar a} \partial_{\bar p} \partial_p J_k^{\bar b} J^{k}_{\bar a} \phi_{a\bar b}-\frac{1}{2}\sum_{p=1}^2F^{b\bar b}J^{\bar k}_{b} \partial_{\bar p} \partial_p J_{\bar k}^a \phi_{a\bar b}-C\mathcal{F}
\\
&=
\frac{1}{2}\sum_{p=1}^2F^{a\bar a} \partial_{\bar p} \partial_p\phi_{a\bar a}+\frac{1}{2}\sum_{p=1}^2F^{a\bar a} J_k^{\bar b} \partial_{\bar p} \partial_p J^{k}_{\bar a} \phi_{a\bar b}+\frac{1}{2}\sum_{p=1}^2F^{b\bar b}\partial_{\bar p} \partial_pJ^{\bar k}_{b}  J_{\bar k}^a \phi_{a\bar b}-C\mathcal{F}\\
&=\frac{1}{2}\sum_{p=1}^2F^{a\bar a} \partial_{\bar p} \partial_p\phi_{a\bar a}-C\mathcal{F}
\end{split}
\]
at $ x_0 $, where we used again \eqref{eq:christ}. In a similar way we have 
\[
\sum_{p=1}^2F^{k\bar k} \partial_{\bar k} \partial_k g^\phi_{p\bar p}  \leq \frac{1}{2}\sum_{p=1}^2F^{k\bar k} \partial_{\bar k} \partial_k\phi_{p\bar p} +C\mathcal{F}
\]
at $ x_0 $, and thus we obtain \eqref{eq:swap}.

\medskip
Differentiating the equation $F(A_{\varphi})=h $ we have
\begin{equation}\label{eq:der1_eq}
h_j=F^{i\bar k}g^\phi_{i\bar k,j}=F^{k\bar k}g^\phi_{k\bar k,j}\,,
\end{equation}
and 
\begin{equation}\label{eq:der_eq}
h_{j\bar j}=F^{i\bar k,r\bar s}g^\phi_{i\bar k,j}g^\phi_{r\bar s,\bar j}+F^{k\bar k}g^\phi_{k\bar k,j\bar j} 
\end{equation}
at $x_0$. 
Using \eqref{eq:swap} in \eqref{eq:der2_lambda} and applying \eqref{eq:der_eq} with $j=1,2$ we get
\[
F^{k\bar k} \tilde \lambda_{1,k\bar k} \geq -F^{ik,rs}\sum_{p=1}^2g^\phi_{i\bar k,p}g^\phi_{r\bar s,\bar p}-C\mathcal{F}\geq -C\mathcal{F}
\]
at $x_0$, 
where we also used that $F$ is concave. Hence we have
\begin{equation}\label{prima}
L\left( 2\sqrt{\tilde \lambda_1} \right)\geq -\frac{1}{2\lambda_1\sqrt{\lambda_1}}F^{k\bar k}\Bigl |\sum_{p=1}^2 g^\phi_{p\bar p,\bar k}\Bigr |^2-C\mathcal{F} \quad \mbox{ at } x_0\,.
\end{equation}

\medskip 
Now we handle the second and the third term of $L(\tilde Q)$. 
We have 
\[
\begin{split}
L\left( \alpha(|\nabla\phi|^2_g) \right)&=F^{k\bar k}\left( \alpha''\nabla_{\bar k}|\nabla\phi|^2_g\nabla_k|\nabla\phi|^2_g + \alpha' \nabla_{\bar k}\nabla_k |\nabla\phi|^2_g \right)\\
&=F^{k\bar k}\left( \alpha''\Bigl| \sum_{j=1}^{2n} (\phi_{jk}\phi_{\bar j} + \phi_j\phi_{\bar jk})\Bigr|^2 + \alpha' \sum_{j=1}^{2n} (\phi_{jk\bar k}\phi_{\bar j}+|\phi_{jk}|^2+|\phi_{j\bar k}|^2+\phi_j\phi_{\bar jk\bar k}) \right)\,
\end{split}
\]
at $x_0$. 
Since
\[
\phi_{jk\bar k}=\phi_{k\bar k j}+{R_{k\bar k j}}^q\phi_q\,, \qquad \phi_{\bar j k\bar k}=\phi_{k\bar k \bar j}
\]
and
\[
F^{k\bar k}g^\phi_{k\bar k,j}=F^{k\bar k}\left( \chi_{k\bar k,j}+ \frac{1}{2}\phi_{k\bar k j}+\frac{1}{2}J_k^{\bar b}J_{\bar k}^a\phi_{a\bar b j} \right)\leq F^{k\bar k}\phi_{k\bar k j}+C\mathcal{F}\,,
\]
\[F^{k\bar k}g^\phi_{k\bar k,\bar j}=F^{k\bar k}\left( \chi_{k\bar k,\bar j}+ \frac{1}{2}\phi_{k\bar k \bar j}+\frac{1}{2}J_k^{\bar b}J_{\bar k}^a\phi_{a\bar b \bar j} \right)\leq F^{k\bar k}\phi_{k\bar k \bar j}+C\mathcal{F}
\]
at $ x_0 $, using \eqref{eq:der1_eq} and its conjugate, keeping in mind that $0<\alpha'<(2N)^{-1}$, we obtain 
\[
\alpha' F^{k\bar k}\left( \sum_{j=1}^{2n} (\phi_{jk\bar k}\phi_{\bar j}+\phi_j\phi_{\bar jk\bar k}) \right)\geq -C\mathcal{F}
\]
at $x_0$. Moreover, using the inequality $|a+b|^2\geq \frac{1}{2}|a|^2-|b|^2 $ we have
\[
\begin{split}
\alpha' F^{k\bar k}\sum_{j=1}^{2n} \left(|\phi_{jk}|^2+|\phi_{j\bar k}|^2\right) &\geq \frac{1}{4N} F^{k\bar k} |\phi_{k\bar k}|^2= \frac{1}{8N}F^{k\bar k} (|\phi_{k\bar k}|^2+|J_k^{\bar b}J_{\bar k}^a\phi_{a\bar b}|^2)\\
&=\frac{1}{8N}F^{k\bar k} (|2g^\phi_{k\bar k}-2\chi_{k\bar k}-J_k^{\bar b}J_{\bar k}^a\phi_{a\bar b}|^2+|J_k^{\bar b}J_{\bar k}^a\phi_{a\bar b}|^2)\\
&\geq \frac{1}{4N} F^{k\bar k} |g^\phi_{k\bar k}|^2-C\mathcal{F}
\end{split}
\]
at $x_0$. 
Furthermore, since $\tilde Q_k(x_0)=0$ we infer  
\[
\sum_{p=1}^2\frac{g^\phi_{p\bar p,k}}{\sqrt{\lambda_1}}+\alpha'\sum_{j=1}^{2n}(\phi_{jk}\phi_{\bar j}+\phi_{j}\phi_{\bar jk})+\beta'\phi_k=0
\]
at $x_0$, and using $\alpha''=2(\alpha')^2$, we obtain 
\[
\begin{split}
\alpha'' F^{k\bar k}\Bigl| \sum_{j=1}^{2n} (\phi_{jk}\phi_{\bar j} + \phi_j\phi_{\bar jk})\Bigr|^2&=2F^{k\bar k} \left| \sum_{p=1}^2 \frac{g^\phi_{p\bar p,\bar k}}{\sqrt{\lambda_1}}+\beta'\phi_k \right |^2\\
&\geq \frac{2\epsilon }{\lambda_1}F^{k\bar k}  \Bigl |\sum_{p=1}^2g^\phi_{p\bar p,\bar k}\Bigr |^2-\frac{2\epsilon}{1-\epsilon}(\beta')^2F^{k\bar k}|\phi_k|^2\,,
\end{split}
\]
at $x_0$, where we used the inequality $ |a+b|^2\geq \epsilon |a|^2-\frac{\epsilon}{1-\epsilon}|b|^2 $ for every $\epsilon \in [0,1)$. Hence 
\begin{equation}\label{seconda}
L\left( \alpha(|\nabla\phi|^2_g) \right)\geq \frac{2\epsilon }{\lambda_1}F^{k\bar k}  \Bigl |\sum_{p=1}^2g^\phi_{p\bar p,\bar k}\Bigr |^2-\frac{2\epsilon}{1-\epsilon}(\beta')^2F^{k\bar k}|\phi_k|^2+\frac{1}{4N} F^{k\bar k} |g^\phi_{k\bar k}|^2-C\mathcal{F} \qquad \mbox{ at }x_0\,.
\end{equation}
Moreover, 
\begin{equation}\label{terza}
L\left( \beta(\phi)\right)=\beta' F^{k\bar k}\phi_{k\bar k}+ \beta'' F^{k\bar k}\phi_k \phi_{\bar k}\quad \mbox{ at }x_0\,.
\end{equation}
Then \eqref{prima}, \eqref{seconda}, \eqref{terza} and  $ L(\tilde Q)(x_0)\leq 0$ imply
\begin{equation}\label{eq:Main}
\frac{4\epsilon \sqrt{\lambda_1}-1}{2\lambda_1\sqrt{\lambda_1}}F^{k\bar k}\Bigl |\sum_{p=1}^2 g^\phi_{p\bar p,\bar k}\Bigr |^2+\left(\beta''-\frac{2\epsilon}{1-\epsilon}(\beta')^2\right)F^{k\bar k}|\phi_k|^2+ \frac{1}{4N} F^{k\bar k} |g^\phi_{k\bar k}|^2+\beta' F^{k\bar k}\phi_{k\bar k}-C\mathcal{F}\leq 0 \,.
\end{equation}
at $x_0$. By choosing
\[
\epsilon<\frac{1}{18D^2+1}
\]
and assuming that $\sqrt{\lambda_1}(x_0)\geq \frac{1}{4\epsilon}$ the first two terms on the right hand side of \eqref{eq:Main} are non-negative. Hence
\begin{equation}\label{eq:newMain}
\frac{1}{4N} F^{k\bar k} |g^\phi_{k\bar k}|^2+\beta' F^{k\bar k}\phi_{k\bar k}-C\mathcal{F}\leq 0 	\mbox{ at }x_0\,.  
\end{equation}

Again, we assume $ \underline{\phi}\equiv 0 $, otherwise we could replace $ \chi $ with a suitably chosen form in order to simplify the equation. By definition of $ \mathcal{C} $-subsolution we can find $ \delta,R>0 $ such that
\[
\left( \lambda(g^{-1}\chi)-2\delta {\bf 1}+\Gamma_n \right)\cap \partial \Gamma^{h(x)}\subseteq B_R(0)\,, \qquad \text{at every }x\in M\,.
\]
Suppose $ \lambda_1>R $ then $ |\lambda(A_\phi)|>R $ and we can then apply Lemma \ref{dichotomy} to deduce the existence of a constant $ \kappa>0 $ such that either
\[
-F^{k\bar k}\phi_{k\bar k}>\kappa \mathcal{F}\,,
\]
or
\begin{equation*}
F^{k\bar k}>\kappa \mathcal{F}\,, \qquad \text{for all } k=1,\dots,2n\,.
\end{equation*}
In the first case, choosing $ D $ large enough we can guarantee $ \beta'F^{k\bar k}\phi_{k\bar k}-C\mathcal{F}> 0 $ which, together with \eqref{eq:newMain} yields a contradiction. Therefore, we only need to consider the second case. We have
\[
\beta'F^{k\bar k}\phi_{k\bar k}=\frac{1}{2}\beta'F^{k\bar k}(\phi_{k\bar k}+J_k^ {\bar b}J_{\bar k}^a\phi_{a\bar b})=\beta'F^{k\bar k}(g^\phi_{k\bar k}-\chi_{k\bar k})\geq -3D F^{k \bar k}g^\phi_{k\bar k}-C\mathcal{F}\,,
\]
and so from \eqref{eq:newMain} we obtain
\begin{equation}
\label{eq:newnewMain}
\frac{1}{4N} F^{k\bar k} |g^\phi_{k\bar k}|^2-3DF^{k \bar k}g^\phi_{k\bar k}-C\mathcal{F}\leq 0\,.
\end{equation}
Using $ F^{k \bar k}|g^\phi_{k\bar k}|^2\geq F^{11}|g^\phi_{1\bar 1}|^2=F^{11}\lambda_1^2>\kappa \mathcal{F}\lambda_1^2 $ in \eqref{eq:newnewMain}, we get
\begin{equation}
\label{eq:finale}
\frac{1}{4N}\kappa \mathcal{F}\lambda_1^2-3DF^{k \bar k}g^\phi_{k\bar k}-C\mathcal{F}\leq 0\,.
\end{equation}
Since $ f $ is concave, for $ \lambda=(\lambda_1,\dots,\lambda_n) $ and $ \mathbf{1}=(1,\dots,1) $ we have
\[
f(\lambda)-f(\mathbf{1})\geq \nabla f (\lambda) \cdot (\lambda - \mathbf{1})
\]
which gives, at $ x_0 $
\[
\begin{split}
F^{k \bar k}g^\phi_{k\bar k}&=\sum_{j=1}^n\left(F^{2j-1\, \overline{2j-1}}+F^{2j\, \overline{2j}}\right)\lambda_j=2\sum_{j=1}^n f_j(\lambda) \lambda_j \leq 2f(\lambda)-2f(\mathbf{1})+ 2\sum_{j=1}^n f_j (\lambda)\\
&=2h-2f(\mathbf{1})+\mathcal{F}\leq C\mathcal F
\end{split}
\]
hence, we deduce from \eqref{eq:finale}
\[
\lambda_{1}^2\leq CN\,,
\]
which implies the bound. 
\end{proof}

\section{Gradient estimate}\label{gradient}
In this section we use interpolation inequalities to deduce a bound for the gradient of solutions to \eqref{eq_main}.

\begin{prop}\label{Prop:C1}
Let $ (M^{4n},I,J,K,g) $ be a compact hyperhermitian manifold. Assume that every solution $\phi$ of \eqref{eq_main} satisfies
\[
\Delta_g \phi \leq C(\|\nabla \phi\|_{C^0}+1)\,.
\]
Then there is a bound
\[
\Delta_g \phi \leq C'\,,
\]
depending only on $ (M,I,J,K,g) $, $ C $ and $ \|\phi \|_{C^0} $.
\end{prop}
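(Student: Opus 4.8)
The plan is to derive the $C^1$-bound from the weak Laplacian estimate $\Delta_g\phi\le C(\|\nabla\phi\|_{C^0}+1)$ by a blow-up/interpolation argument. First I would record that, because $\phi$ is $\Gamma$-admissible and $\Gamma\subseteq\{\sum_i\lambda_i>0\}$, the complex Laplacian $\Delta_\omega\phi$ is bounded below by a constant (exactly as in the proof of Lemma \ref{Lem:Harnack}), hence the real Laplacian $\Delta_g\phi$ is bounded below as well; combined with the hypothesis this gives the two-sided bound $|\Delta_g\phi|\le C(\|\nabla\phi\|_{C^0}+1)$. Set $K:=\|\nabla\phi\|_{C^0}$ and assume for contradiction that $K$ can be made arbitrarily large along a sequence of solutions. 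The idea is that a function with $\|\phi\|_{C^0}$ fixed but $\|\nabla\phi\|_{C^0}$ huge must have a very thin transition layer, and the Laplacian bound $|\Delta_g\phi|\lesssim K$ is too weak to sustain such a layer.

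The cleanest way I would organise this is via an interpolation inequality on a compact manifold: for every $\eta>0$ there is $C_\eta$ with $\|\nabla\phi\|_{C^0}\le \eta\|\phi\|_{C^{1,1}}+C_\eta\|\phi\|_{C^0}$ (or the analogous one with the Hessian replaced by $\Delta_g\phi$ after elliptic regularity). Since the datum is only a bound on $\Delta_g\phi$ and not on the full Hessian, I would first upgrade: from $|\Delta_g\phi|\le C(K+1)$ and $\|\phi\|_{C^0}\le C$, standard $L^p$ elliptic estimates for the Laplace–Beltrami operator give $\|\phi\|_{W^{2,p}}\le C(K+1)$ for all $p<\infty$, and then Morrey's embedding $W^{2,p}\hookrightarrow C^{1,\alpha}$ (take $p>4n$, $\alpha=1-4n/p$) yields $\|\phi\|_{C^{1,\alpha}}\le C(K+1)$. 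Now rescale: working in a coordinate ball where $|\nabla\phi|$ is nearly maximal, set $\tilde\phi(y)=\phi(x_0+K^{-1}y)$; then $\|\tilde\phi\|_{C^0}\le C$, $\|\nabla\tilde\phi\|_{C^0}\le 1$ with value $\approx 1$ at the origin, and $|\Delta\tilde\phi|\le C(K+1)K^{-2}\to 0$. By the $C^{1,\alpha}$ bound the $\tilde\phi$ converge in $C^1_{loc}(\R^{4n})$ to a limit $\phi_\infty$ with $\|\phi_\infty\|_{C^0}\le C$, $|\nabla\phi_\infty(0)|=1$, and $\Delta\phi_\infty=0$ in the distributional sense, hence $\phi_\infty$ is a bounded harmonic function on all of $\R^{4n}$. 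By Liouville $\phi_\infty$ is constant, contradicting $|\nabla\phi_\infty(0)|=1$.

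Alternatively, and perhaps more in the spirit of the remark in the introduction that no Liouville theorem is needed, one can avoid the blow-up entirely: apply the interpolation inequality $\|\nabla\phi\|_{C^0}\le \eta\,\|D^2\phi\|_{C^0}+C_\eta\|\phi\|_{C^0}$ together with the Calderón–Zygmund bound $\|D^2\phi\|_{L^p}\le C(\|\Delta_g\phi\|_{L^p}+\|\phi\|_{C^0})\le C(K+1)$ and Morrey to get $\|D^2\phi\|_{C^0}\ge c\,K$ is impossible unless $K$ is already bounded — more precisely, $\|\nabla\phi\|_{C^{0}}\le C(\|\Delta_g\phi\|_{L^p}+\|\phi\|_{C^0})^{\theta}\|\phi\|_{C^0}^{1-\theta}$ for a suitable $\theta<1$ (by Gagliardo–Nirenberg interpolation between $W^{2,p}$ and $L^\infty$), so $K\le C(K+1)^\theta$, which forces $K\le C'$. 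The main obstacle is making the interpolation/elliptic-estimates step clean on a manifold rather than on $\R^{4n}$: one must either use a fixed finite atlas and localise (controlling the extra terms from cutoff functions and the lower-order part of $\Delta_g$, which are harmless since they only cost $\|\phi\|_{C^0}$ and $\|\nabla\phi\|_{C^0}$ with a small coefficient after rescaling), or invoke a global Sobolev/interpolation inequality on $(M,g)$ directly. Either route is routine; the only point requiring a little care is that the exponent $\theta$ in the interpolation is strictly less than $1$, which is what makes the inequality $K\le C(K+1)^\theta$ yield an a priori bound rather than a tautology.
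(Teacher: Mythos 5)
Your second route (interpolation plus elliptic $L^p$ estimates and Morrey) is essentially the paper's proof. The paper uses the additive interpolation $\norma{\phi}_{C^1}\le C_\epsilon\norma{\phi}_{C^0}+\epsilon\norma{\phi}_{C^{1,\alpha}}$ from \cite[Section 6.8]{GT}, then Morrey and $L^p$ elliptic estimates to bound $\norma{\phi}_{C^{1,\alpha}}$ by $\norma{\phi}_{C^0}+\norma{\Delta_g\phi}_{C^0}$, then the hypothesis, and finally absorbs the gradient term by choosing $\epsilon$ small; your multiplicative Gagliardo--Nirenberg version producing $K\le C(K+1)^\theta$ with $\theta<1$ accomplishes the same absorption. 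Both rely, tacitly in the paper and explicitly in your write-up, on the two-sided control of $\Delta_g\phi$ coming from $\Gamma$-admissibility (the lower bound as in Lemma~\ref{Lem:Harnack}), which is indeed needed before the $L^p$ elliptic estimate can be applied; you are right to record it.

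Your first route, the rescaling/blow-up to a bounded entire harmonic function and Liouville, is a genuinely different argument, and it is precisely the kind of argument the authors announce they are avoiding ("In particular we do not need to apply any Liouville-type theorem"). It is a correct and standard scheme, but it is heavier machinery: it requires passing to a subsequence, a diagonal argument for $C^1_{\mathrm{loc}}$ convergence, and the Liouville theorem, whereas the interpolation proof is a two-line absorption. The trade-off is that the blow-up argument is more robust (it would also handle a quadratic gradient term $\|\nabla\phi\|^2_{C^0}$ on the right, as in Sz\'ekelyhidi's setting, after replacing Liouville for harmonic functions by a Liouville theorem for the limiting fully non-linear equation); the interpolation argument only works because the Laplacian estimate here is linear in $\|\nabla\phi\|_{C^0}$, which is exactly the gain from the hyperk\"ahler assumption highlighted in the introduction.
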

\begin{proof}
In view of \cite[section 6.8]{GT}, for any $ \epsilon>0 $ and  $ 0<\alpha<1 $ there exists a constant $ C_\epsilon>0 $ such that
\[
\norma{\phi}_{C^1}\leq C_\epsilon \norma{\phi}_{C^0}+\epsilon\norma{\phi}_{C^{1,\alpha}}\,.
\]
By choosing $ p=\frac{4n}{1-\alpha}>4n $ applying Morrey's inequality and elliptic $ L^p $-estimates for the Laplacian, we have 
\[
\norma{\phi}_{C^{1,\alpha}}\leq C_1 \norma{\phi}_{W^{2,p}}\leq C_2\left( \norma{\phi}_{L^p}+\norma{\Delta_g \phi}_{L^p} \right)\leq C_2\left(\norma{\phi}_{C_0}+\norma{\Delta_g \phi}_{C^0}\right)
\]
for some positive constants $ C_1,C_2 $ depending only on $ \alpha $. Therefore, we obtain
\[
\norma{\phi}_{C^1}\leq C_\epsilon \norma{\phi}_{C^0}+\epsilon C_2 C\left( \norma{\phi}_{C^1}+1 \right)\,, 
\]
which gives the gradient bound, and thus the Laplacian bound we were looking for, when we choose $\epsilon<(C_2C)^{-1} $.
\end{proof}

\section{$ C^{2,\alpha} $ estimate}\label{C2alpha}
	
\begin{prop}\label{prop_C2a}
Let $ (M^{4n},I,J,K,g) $ be a compact hyperhermitian manifold. If $ \phi $ is a solution to \eqref{eq_main} such that $ \|\phi\|_{C^0} $ and $ \Delta_g\phi $ are bounded from above by a constant $ C>0 $, then there exists $ \alpha \in (0,1) $ and a constant $ C'>0 $, depending only on the background data and $ C $ such that
\[
\|\phi \|_{C^{2,\alpha}}\leq C'\,.
\]
\end{prop}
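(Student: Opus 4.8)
The plan is to deduce Proposition \ref{prop_C2a} from the general interior $C^{2,\alpha}$ estimate of Tosatti--Wang--Weinkove--Yang \cite{TWWY}, so that the real content is to check that \eqref{eq_main}, together with the a priori bounds at hand, fits their hypotheses. First I would fix a finite atlas of $I$-holomorphic coordinate charts of $M$. In each chart the endomorphism $A_\phi=g^{-1}g_\phi$ is, entrywise, an affine function of $\sqrt{-1}\partial\bar\partial\phi$ (through its $J$-anti-invariant part), with coefficients depending smoothly on $x$ via $g$, $J$ and $\chi$, and it is hyperhermitian with respect to $g$. Hence $F(A_\phi)=f(\lambda(A_\phi))$ is a function of $\sqrt{-1}\partial\bar\partial\phi$ of the form $X\mapsto f\bigl(\lambda(g^{-1}(\chi+\tfrac12(X-J^tXJ)))\bigr)$: since $M\mapsto \lambda(g^{-1}M)$ is, up to conjugation, a linear image of a Hermitian matrix, assumptions 1.\ and 2.\ on $f$ (symmetry, $f_i>0$, concavity) make this a concave, degenerate-elliptic operator of complex Hessian type with smooth coefficients — exactly the class considered in \cite{TWWY}. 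The quaternionic nature of the problem only enters through the fact that $f$ acts on the quaternionic eigenvalues of $A_\phi$, which is still a concave symmetric function of the (projected) eigenvalues of $\sqrt{-1}\partial\bar\partial\phi$, and through the fixed tensor $J$ appearing in the coefficients.

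The substantive step is uniform ellipticity along the solution, together with the $C^0$-type bounds required by \cite{TWWY}. Since \eqref{eq_main} presupposes at least $C^1$ control, I would first recover $\|\phi\|_{C^1}$ from the hypotheses, along the lines of the proof of Proposition \ref{Prop:C1}: from $\lambda(A_\phi)\in\Gamma\subseteq\{\sum_i\lambda_i>0\}$ one gets $\mathrm{tr}_\omega\chi_\phi>0$, hence $\Delta_\omega\phi\geq-\mathrm{tr}_\omega\chi\geq-C$, and combining this with the assumed bound $\Delta_g\phi\leq C$ and the relation between the Riemannian and the complex Laplacian yields the two-sided estimate $|\Delta_g\phi|\leq C(1+\|\nabla\phi\|_{C^0})$; the interpolation--Morrey--elliptic-$L^p$ argument then gives $\|\phi\|_{C^1}\leq C$, and afterwards $\|\phi\|_{W^{2,p}}\leq C$ for all $p$ and $\|\phi\|_{C^{1,\beta}}\leq C$ for all $\beta<1$. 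Next, the upper bound on $\Delta_g\phi$ controls $\mathrm{tr}_\omega\chi_\phi=\mathrm{tr}(A_\phi)$ from above; this, together with $\lambda(A_\phi)\in\Gamma$ and the ellipticity, confines $\lambda(A_\phi)$ to a bounded region (as in the model cases, cf.\ \cite[Lemma 9]{Szekelyhidi}), while the non-degeneracy assumption 3.\ — $f(\lambda(A_\phi))=h\geq\inf_M h>\sup_{\partial\Gamma}f$ — keeps $\lambda(A_\phi)$ at a definite distance from $\partial\Gamma$. Hence $\lambda(A_\phi(x))$ lies in a fixed compact subset $K\Subset\Gamma$ uniformly in $x$, so $f_i(\lambda(A_\phi))$ is pinched between two positive constants (uniform ellipticity of the linearised operator $F^{i\bar j}(A_\phi)$) and $\|A_\phi\|_{C^0}\leq C$; in particular the datum of the equation — the $J$-anti-invariant form $\chi_\phi$, i.e.\ the "complex Hessian'' of $\phi$ that actually enters \eqref{eq_main} — is bounded in $C^0$.

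With smooth coefficients, concavity, uniform ellipticity along $\phi$, and the $C^0$ bounds $\|\phi\|_{C^0}\leq C$ and $\|A_\phi\|_{C^0}\leq C$ in hand, I would apply the $C^{2,\alpha}$ estimate of \cite{TWWY} in each coordinate chart and patch the local estimates by a partition-of-unity/covering argument, obtaining $\alpha\in(0,1)$ and $C'>0$ — depending only on the background data and $C$ — with $\|\phi\|_{C^{2,\alpha}}\leq C'$. I expect the main obstacles to be the verification of uniform ellipticity along the solution (where assumption 3.\ and the cone inclusion $\Gamma\supseteq\Gamma_n$, $\Gamma\subseteq\{\sum_i\lambda_i>0\}$ do the work of pinning $\lambda(A_\phi)$ into a compact subset of $\Gamma$), together with the bookkeeping required to cast the quaternionic equation in precisely the form treated in \cite{TWWY}; note that the equation is \emph{not} uniformly elliptic in all real Hessian directions — exactly as for the complex Monge--Amp\`ere equation — and it is the result of \cite{TWWY} that supplies the passage from a bound on the "complex Hessian'' $A_\phi$ to full Hölder control of the real Hessian of $\phi$.
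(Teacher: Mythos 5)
Your proposal follows the same route as the paper: in $I$-holomorphic charts, recast \eqref{eq_main} as a concave, uniformly elliptic operator of the form treated by \cite[Theorem 1.2]{TWWY}, use the hypotheses on $\|\phi\|_{C^0}$ and $\Delta_g\phi$ together with $\lambda(A_\phi)\in\Gamma$ and non-degeneracy (assumption 3.) to confine $\lambda(A_\phi)$ to a compact subset of $\Gamma$, and conclude the $C^{2,\alpha}$ bound. Two small remarks: the interpolation detour through a $C^1$ bound is unnecessary — \cite[Theorem 1.2]{TWWY} only needs the $C^0$ bound on $\phi$ and the confinement of $T(D^2\phi,\cdot)$ to a compact convex set on which $F$ is uniformly elliptic, with the gradient bound then following a posteriori — and the citation of \cite[Lemma 9]{Szekelyhidi} for bounding $\lambda(A_\phi)$ is misplaced, as that lemma concerns the lower bound $\sum_k F^{k\bar k}\geq\tau$ rather than eigenvalue confinement.
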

\begin{proof}
For any point $ x_0\in M $, take a $I$-holomorphic coordinate chart $(z^1,\dots,z^{2n})$ centered at $ x_0 $ and assume that the domain of the chart contains $ B_1(0) $. Consider also the induced real coordinates $(x^1,\dots,x^{4n})$, where $z^k=x^k+\sqrt{-1}x^{2n+k}$, for $k=1,\dots,2n$. We take into account the real representation of complex matrices $\iota \colon\mathbb \C^{2n,2n}\to \R^{4n,4n} $, defined as
	$$ 
	\iota(H):=
	\begin{pmatrix}
		\Re(H) & \Im(H)\\
		-\Im(H) & \Re(H)
	\end{pmatrix}\,.
 $$
The map $\iota$ sends the space $\mathrm{Herm}(2n)$ of $2n\times 2n$ Hermitian matrices to the space $\mathrm{Sym}(4n)$ of $4n\times 4n$ real symmetric matrices. We will also need the projections $\mathrm{p}\colon \mathrm{Sym}(4n) \to \Im(\iota)$, $T\colon \mathrm{Sym}(4n)\times B_1(0)\to \Im(\iota)$
	$$
	{\rm p}(N):=\frac12(N+{^tI}NI)\,, \qquad T(N,x)=\frac{1}{4}(\mathrm{p}(N)+\iota(J^t(x)) \mathrm{p}(N) \iota(J(x)))\,.
	$$
In the chosen coordinates, for a $ C^2 $-regular function $ u\colon B_1(0)\subseteq \C^{2n} \to \R $ we have 
	\[
	\iota({\rm Hess}_\mathbb{C}u)=\frac{1}{2} {\rm p}(D^2u)\,.
	\]
Observe that, whenever $ N\in \mathrm{Sym}(4n) $, for every $x\in B_1(0)$, the endomorphism
	\[
	\tilde N(x):=g^{-1}(x)(\iota^{-1}(T(N,x)))\,,
	\]
	is hyperhermitian with respect to $ g $. Define the set
	\[
	\mathcal{E}=\left\{ N \in \mathrm{Sym}(4n) \mid \lambda(\tilde N(0)) \in \bar \Gamma^{\sigma}\cap \overline{B_{R}(0)} \right\}\,,
	\]
	where $ \sigma $ and $ R $ are to be chosen. The set $ \mathcal{E} $ is compact and by convexity of $ \Gamma $ it is also convex. Note that by continuity of $ g $, and possibly shrinking $ B_1(0) $ to a smaller radius $ r\in (0,1) $, it follows that $\lambda(\tilde N(x))$ is close to $\lambda(\tilde N(0))$. 
	Possibly shrinking $ B_1(0) $ again we may assume that if $ N $ lies in a sufficiently small neighborhood $ U $ of  $ \mathcal{E} $, then $ \lambda(\tilde N(x))\in \bar \Gamma^{\sigma}\cap \overline{B_{2R}(0)} $ for any $ x\in B_1(0) $.
		
	The bound $ \Delta_g \phi \leq C $ implies that $ \sigma $ and $ R $ can be chosen so that
	\[
	\iota (\chi(x))+T(D^2\phi(x),x)= \iota \left( \chi(x)+ \frac{\mathrm{Hess}_\C \phi(x) +J^t(x)\mathrm{Hess}_\C\phi (x) J(x)}{2} \right)  \in \mathcal{E}\,,
	\]
  for each $ x\in B_1(0) $.
	
Finally, our assumptions on $ f $ ensure that \cite[Theorem 1.2]{TWWY} can be applied with
	\begin{itemize}
		\itemsep0.2em
		\item $ F\colon \mathrm{Sym}(4n)\times B_1(0)\to \R $ defined as $ F(N,x)=f(\lambda(\tilde N(x))) $ for $ N\in U $, and extended smoothly to all of $ \mathrm{Sym}(4n)\times B_1(0) $;
		\item $ S\colon B_1(0)\to \mathrm{Sym}(4n) $ defined as $ S(x)=\iota(\chi(x)) $;
		\item $ T\colon \mathrm{Sym}(4n)\times B_1(0)\to \mathrm{Sym}(4n) $ defined as $ T(N,x)=\frac{1}{4}(\mathrm{p}(N)+\iota(J^t(x)) \mathrm{p}(N) \iota(J(x))) $.
	\end{itemize}
	And since $ \|\phi\|_{C^0}\leq C $ we obtain the desired bound $ \|\phi\|_{C^{2,\alpha}}\leq C $ for some $ \alpha \in (0,1) $.
\end{proof}

\section{Proof of the main result}\label{finale}

\begin{proof}[Proof of Theorem $\ref{main}$]
Let $(M,I,J,K,g)$ be a compact hyperk\"ahler manifold and let $\phi$ be a solution to \eqref{eq_main} such that $\sup_M \phi=0$. By Proposition \ref{Prop:C0} there is a constant $C_1>0$ such that
\[
\|\phi \|_{C^0}\leq C_1\,,
\]
with $C_1$ depending only on background data. We can then apply Proposition \ref{Prop:C2} to get a bound of the form
\[
\Delta_g \phi \leq C_2\left( \|\nabla \phi \|_{C^0} +1 \right)
\]
where $C_2>0$ depends only on the background data. We then deduce a gradient bound from Proposition \ref{Prop:C1} and thus we have
\[
\Delta_g \phi \leq C_3\,.
\]
The $C^{2,\alpha}$ estimate is then implied by Proposition \ref{prop_C2a} and the result follows.
\end{proof}

\end{document}